\pgfplotsset{compat=newest}
\newcommand\reallywidehat[1]{%
\savestack{\tmpbox}{\stretchto{%
  \scaleto{%
    \scalerel*[\widthof{\ensuremath{#1}}]{\kern-.6pt\bigwedge\kern-.6pt}%
    {\rule[-\textheight/2]{1ex}{\textheight}}
  }{\textheight}%
}{0.5ex}}%
\stackon[1pt]{#1}{\tmpbox}%
}
\newif\if@anonymize
  \newcommand{\highlight@DoHighlight}{
    \fill [outer sep = -15pt, inner sep = 0pt, color=black]
          ($(begin highlight)+(0,8pt)$) rectangle ($(end highlight)+(0,-3pt)$) ;
  }
  \newcommand{\highlight@BeginHighlight}{
    \coordinate (begin highlight) at (0,0) ;
  }
  \newcommand{\highlight@EndHighlight}{
    \coordinate (end highlight) at (0,0) ;
  }
  \newdimen\highlight@previous
  \newdimen\highlight@current
  \newlength{\item@width}
  \DeclareRobustCommand*\anonymize{%
    \SOUL@setup
    \def\SOUL@preamble{%
      \begin{tikzpicture}[overlay, remember picture]
        \highlight@BeginHighlight
        \highlight@EndHighlight
      \end{tikzpicture}%
    }%
    \def\SOUL@postamble{%
      \begin{tikzpicture}[overlay, remember picture]
        \highlight@EndHighlight
        \highlight@DoHighlight
      \end{tikzpicture}%
    }%
    \def\SOUL@everyhyphen{%
      \discretionary{%
        \SOUL@setkern\SOUL@hyphkern
        \SOUL@sethyphenchar
        \tikz[overlay, remember picture] \highlight@EndHighlight ;%
      }{%
      }{%
        \SOUL@setkern\SOUL@charkern
      }%
    }%
    \def\SOUL@everyexhyphen##1{%
      \SOUL@setkern\SOUL@hyphkern
      \settowidth{\item@width}{##1}%
      \makebox[\item@width]{}%
      \discretionary{%
        \tikz[overlay, remember picture] \highlight@EndHighlight ;%
      }{%
      }{%
        \SOUL@setkern\SOUL@charkern
      }%
    }%
    \def\SOUL@everysyllable{%
      \begin{tikzpicture}[overlay, remember picture]
        \path let \p0 = (begin highlight), \p1 = (0,0) in \pgfextra
          \global\highlight@previous=\y0
          \global\highlight@current =\y1
        \endpgfextra (0,0) ;
        \ifdim\highlight@current < \highlight@previous
          \highlight@DoHighlight
          \highlight@BeginHighlight
        \fi
      \end{tikzpicture}%
      \settowidth{\item@width}{\the\SOUL@syllable}%
      \makebox[\item@width]{}%
      \tikz[overlay, remember picture] \highlight@EndHighlight ;%
    }%
    \SOUL@
  }
  \newcommand{\anonymize}[1]{#1}
\newtheorem{theorem}{Theorem}[section]
\newtheorem{proposition}[theorem]{Proposition}
\newtheorem{proposition/definition}[theorem]{Proposition/Definition}
\newtheorem{lemma}[theorem]{Lemma}
\newtheorem{corollary}[theorem]{Corollary}
\newtheorem{example}[theorem]{Example}
\declaretheoremstyle[
  spaceabove=0pt,
  spacebelow=0pt,
  bodyfont=\normalfont,
  postheadspace=0em,
  qed=\qedsymbol,
  headpunct={},
  headformat={}
]{withouthead}
\newcommand{\tp}{{\scriptscriptstyle\mathsf{T}}}
\let\O\undefined
\DeclareMathOperator{\O}{O}
\DeclareMathOperator{\Id}{Id}
\let\supp\undefined%
\DeclareMathOperator*{\supp}{supp}
\DeclareFontFamily{U} {MnSymbolC}{}
\DeclareFontShape{U}{MnSymbolC}{m}{n}{
  <-6> MnSymbolC5
  <6-7> MnSymbolC6
  <7-8> MnSymbolC7
  <8-9> MnSymbolC8
  <9-10> MnSymbolC9
  <10-12> MnSymbolC10
  <12-> MnSymbolC12}{}
\DeclareFontShape{U}{MnSymbolC}{b}{n}{
  <-6> MnSymbolC-Bold5
  <6-7> MnSymbolC-Bold6
  <7-8> MnSymbolC-Bold7
  <8-9> MnSymbolC-Bold8
  <9-10> MnSymbolC-Bold9
  <10-12> MnSymbolC-Bold10
  <12-> MnSymbolC-Bold12}{}
\DeclareSymbolFont{MnSyC} {U} {MnSymbolC}{m}{n}
\DeclareMathSymbol{\plus}{\mathrel}{MnSyC}{20}
\newcommand{\p}{\plus}
\newcommand{\Rmnum}[1]{\expandafter\@slowromancap\Romannumeral #1@}
\tikzset{
    module/.style={%
        draw, rounded corners,
        minimum width=#1,
        minimum height=7mm,
        font=\sffamily
        },
    module/.default=2cm,
    >=LaTeX
}
\title{Lower Bounds of Functions on Finite Abelian Groups}
\author{Jiangting Yang}
\affiliation{
\institution{Key Lab of Mathematics Mechanization, AMSS }
\department{ University of Chinese Academy of Sciences}
\city{Beijing, 100190}
\country{China}
}
\email{yangjianting@amss.ac.cn}
\author{Ke Ye}
\affiliation{
\institution{Key Lab of Mathematics Mechanization, AMSS}
\department{ University of Chinese Academy of Sciences}
\city{Beijing,100190}
\country{China}
}
\email{keyk@amss.ac.cn}
\author{Lihong Zhi}
\affiliation{
\institution{Key Lab of Mathematics Mechanization, AMSS }
\department{ University of Chinese Academy of Sciences}
\city{Beijing, 100190}
\country{China}
}
\email{lzhi@mmrc.iss.ac.cn}
\keywords{Abelian group,  Fourier sum of squares, Semidefinite programming, Sparse Gram matrices, SAT, MAX-SAT}
\begin{document}

\begin{abstract}
The problem of computing the optimum  of a function on  a finite set is  an important problem  in mathematics and computer science. Many combinatorial problems such as MAX-SAT and MAXCUT can be recognized as optimization problems on the hypercube $C_2^n = \{-1,1\}^n$ consisting of $2^n$ points. It has been noticed that if a finite set is equipped with an abelian group structure, then one can efficiently certify nonnegative functions on it by Fourier sum of squares (FSOS). Motivated by these works, this paper is devoted to developing a framework to find a  lower bound of a function on a finite abelian group efficiently. We implement our algorithm by the  SDP solver SDPNAL+  for computing the verified lower bound of $f$ on $G$  and test it on  the  MAX-2SAT and
 MAX-3SAT benchmark problems from the Max-SAT competitions in 2009 and 2016. Beyond that, we also test our algorithm on random functions on  $C_3^n$.  These experiments demonstrate the advantage of our algorithm over previously known methods.
\end{abstract}
\maketitle

\section{Introduction}\label{sec:intro}
The problem of computing the optimum of a function on a finite set is an important but challenging problem in mathematics and computer science. Examples include the Knapsack problem\cite{Mathews1896,martello1990knapsack}, the set cover problem\cite{korte2011combinatorial,MR3238990}, the travelling salesman problem\cite{robinson1949hamiltonian,MR4398822}, the vehicle routing problem\cite{dantzig1959truck,toth2002vehicle}, the $k$-SAT problem \cite{cook1971complexity,karp1972reducibility} and their numerous variants. Although each of these problems can be formulated as an integer programming, there do not exist polynomial time algorithms for most of them unless \textbf{P}$=$\textbf{NP} \cite{karp1972reducibility,krentel1988complexity, papadimitriou1995computational, korte2011combinatorial,MR455550,toth2002vehicle}. Therefore various approximation algorithms are employed to resolve the issue \cite{vazirani2001approximation,MR1427541,goemans1995improved,VVH2008,christofides1976worst,laurent2003comparison}. Semidefinite programming (SDP) is one of the most powerful and extensively studied technique to design and analyze an approximation algorithm\cite{MR2582901,MR2306295,646129,VVH2008, MR2644353, austrin2007balanced,377033,kurpisz_et_al:LIPIcs:2016:6336,zhang2021sparse,slot2022sum,lasserre2016max}. Among those successful applications of the SDP technique, the most well-known ones are MAX-2SAT\cite{MR1412228}, MAX-3SAT\cite{646129} and MAX-CUT\cite{MR2306295}. On the other side, it is noticed in \cite{fawzi2016sparse,sakaue2017exact,yang2022computing, yang2022short} that if a finite set is equipped with an abelian group structure, then one can efficiently certify nonnegative functions on it by Fourier sum of squares (FSOS). Motivated by these works, this paper is concerned with establishing a framework to solve the following problem by FSOS and SDP.
\begin{restatable}[lower bound by FSOS]{problem}{bound}\label{problem:lower bound}
Given a function $f$ on a finite abelian group, find a lower bound of $f$ efficiently.
\end{restatable}
Let $S\subseteq \mathbb{C}^n$ be an algebraic variety. Algebraically, identifying $\mathbb{C}[S]$ with $\mathbb{C}[z_1,\dots, z_n]/I(S)$ is a favourable perspective as the latter ring is endowed with rich geometric and algebraic structures. For computational purposes, however, regarding a function as an equivalence class is not convenient, on account of the fact that an equivalence can be represented by two different polynomials. If $S = G$ is a finite group, then there is an alternative algebraic structure on $\mathbb{C}[G]$ which is extremely useful for computations \cite{fawzi2016sparse,sakaue2017exact,yang2022computing, yang2022short}. Namely, one can identify $\mathbb{C}[G]$ with the group ring of $G$ via the Fourier transform \cite{fulton2013representation}. The advantage of such a point of view is that a function $f$ on $G$ can be expanded as $f = \sum_{\chi\in \widehat{G}} \widehat{f}(\chi) \chi$, where $\widehat{G}$ is the dual group of $G$ and $\widehat{f}(\chi)$ is the Fourier coefficient of $f$ at $\chi\in \widehat{G}$. This well-known viewpoint enables us to introduce analytic tools to solve Problem~\ref{problem:lower bound}.
\subsection*{related works and our contributions}
Recently, a method for general-purpose polynomial optimization called the TSSOS hierarchy is proposed in \cite{WML21}. The new method follows the well-known methodology established in \cite{Lasserre01}, but it exploits the sparsity of polynomials to reduce the size of SDP. Combing the TSSOS hierarchy and the method in \cite{WKKM06} for correlative sparsity, \cite{WMLM22} introduces the CS-TSSOS hierarchy for large scale sparse polynomial optimization. In particular, both TSSOS and  CS-TSSOS hierarchies are applicable to optimization problems on finite abelian groups.

Many combinatorial problems, such as MAX-SAT and MAX-CUT, can be recognized as optimization problems on the hypercube $C_2^n = \{-1,1\}^n$. Due to its great importance in computer science, there are various solvers for MAX-SAT. For instance, in \cite{VVH2008}, the quotient structure of $\mathbb{C}[C_2^n]$ is explored for support selection strategies, from which one can solve MAX-SAT by SDP; by combing several optimization techniques specifically designed for MAX-SAT, \cite{wang2019low} provides an efficient SDP-based MIXSAT algorithm; based on the resolution refutation, a solver called MS-builder is proposed in \cite{py2021proof}.

On the one hand, TSSOS and CS-TSSOS hierarchies can handle general polynomial optimization problems, while specially designed solvers such as MIXSAT and MS-builder can only deal with MAX-SAT problems. On the other hand, however, it is natural to expect that these specially designed solvers would outperform general-purpose methods on MAX-SAT problems.

It is our framework of optimization by FSOS that balances the universality and efficiency. Indeed, our method is applicable to any optimization problems on finite abelian groups, including the hypercube $C_2^n$, cyclic group $\mathbb{Z}_N$ and their product.

The main contribution of this paper is the approximation algorithm for the minimum of a function $f$ on an abelian group $G$. To be more specific, for each positive integer $k$, we propose an SDP (Lemma~\ref{lem:sdp formulation-1}) associated with some $S_f\subseteq \widehat{G}$ (Algorithm~\ref{alg:selection}) where $|S_f| = k$. A solution to the SDP provides a good lower bound for $f$ (Theorem~\ref{thm:SOS_with_error}). Since the size of the SDP is $k \times k$, our approximation algorithm is applicable to functions on a large abelian group. Another feature of our method is the support selection strategy for the SDP. Unlike the strategy suggested in \cite{VVH2008}, which is almost the same for all functions on the hypercube, our strategy provides different bases for different functions by exploiting the magnitudes of their Fourier coefficients. Moreover, we provide two rounding methods  based on the  nullspace of the Gram matrix and the lower rank approximation of the moment matrix, respectively. We test our algorithm (Algorithm~\ref{alg:Fast_Lower_Bounds}) and rounding methods on numerical examples in Appendix~\ref{append:experiments}, from which one may clearly see the advantage of our algorithm over aforementioned methods.
\subsection*{applications}
Below we mention some potential applications of our algorithm. Although we can test our algorithm on all these interesting problems and present numerical experiments in Appendix~\ref{append:experiments},  we only concentrate on MAX-2SAT and MAX-3SAT, due to the page limit.
\subsubsection*{SAT problems}
The characteristic function (cf. Subsection~\ref{subsec:characteristic function}) of a Boolean formula in a $k$ Conjunctive Normal Form ($k$-CNF\footnote{A $k$-CNF formula is a CNF formula in which each clause contains at most $k$ literals.}) with $n$ variables can be recognized as a non-negative function on the group $C_2^n$. As a consequence, our framework provides an approximation algorithm for MAX-$k$SAT\footnote{Here we adopt the most commonly used definition \cite{krentel1988complexity} of MAX-$k$SAT, which requires one to compute the maximum number of simultaneously satisfiable clauses in a $k$-CNF formula. There are other definitions of MAX-$k$SAT, though. For instance, it is defined in \cite{arora2009computational} to be the \textbf{NP}-hard problem of finding a maximal satisfiable assignment. In \cite{papadimitriou1995computational}, MAX-$k$SAT means the \textbf{NP}-complete problem which determines the existence of an assignment satisfying at least a given number of clauses.} and its variants such as UN-SAT and MIN-SAT.
\subsubsection*{MAX-CUT problems}
Let $G = (V,E)$ be an undirected graph where $|V| = n$ with edge weights $\{w_{ij}:\{i,j\}\in E\}$. The MAX-CUT problem for $G$ is a partition $V = S \bigsqcup T$ such that the sum of weights of edges between $S$ and $T$ is as large as possible. Thus it can be formulated as an optimization problem on $\mathbb{Z}_2^n$.
\begin{align*}
&\max \sum_{\{i,j\}\in E} w_{ij} x_i (1 - x_j) \\
&s.t.~~x_i \in \mathbb{Z}_2 = \{0,1\}, 1 \le i \le n
\end{align*}
\subsubsection*{ground states of a lattice model}
A lattice model is a fixed set of locations in $\mathbb{R}^3$ on which interacting particles are distributed. A ground state of a lattice model is a configuration of these particles which minimizes the Hamiltonian. Mathematically, computing the ground state of a lattice model may be rephrased as an optimization problem on $\mathbb{Z}_{d_1} \times \cdots \times \mathbb{Z}_{d_n}$ where $n$ is the number of particles and $d_j$ is determined by the type of the $j$-th particle. In \cite{HKDRUCLC16}, the case where $d_1 = \cdots = d_n = 2$ is considered.
\section{Preliminaries}\label{sec:pre}
In this section, we first review the Fourier analysis on abelian groups. After that we provide a brief introduction to Fourier sum of squares (FSOS) on abelian groups. Lastly, we define characteristic functions of (weighted) $k$-CNF formulae, which are important examples of integer-valued functions on abelian groups.
\subsection{Fourier analysis on groups}\label{subsec:Fourier analysis on groups}
We briefly summarize fundamentals of group theory and representation theory in this subsection. For more details, we refer interested readers to \cite{rudin1962fourier,fulton2013representation,MR3443800}.

Let $G$ be a finite abelian group. A \emph{character} of $G$ is a group homomorphism $\chi: G \to \mathbb{C}^{\times}$. Here $\mathbb{C}^{\times}$ is $\mathbb{C} \setminus \{0\}$ endowed with the multiplication of complex numbers as the group operation. The set of all characters of $G$ is denoted by $\widehat{G}$, called the \emph{dual group}\footnote{Since $G$ is an abelian group, $\widehat{G}$ is indeed a group and $\widehat{G} \simeq G$.} of $G$.

According to \cite[Chapter~1]{fulton2013representation}, any function $f$ on $G$ admits the \emph{Fourier expansion}:
\[
f =\sum_{\chi \in \widehat{G}}  \widehat{f} (\chi)\chi,
\]
where $ \widehat{f} (\chi) \coloneqq \frac{1}{|G|} \sum_{g\in G} f(g) \overline{\chi(g)}$ is called the \emph{Fourier coefficient} of $f$ at $\chi \in \widehat{G}$. The \emph{support} of $f$ is
\[
\operatorname{supp}(f) \coloneqq \left\lbrace \chi\in \widehat{G}: \widehat{f} (\chi)\neq 0 \right\rbrace.
\]

As an example, the dual group of the hypercube $C_2^n = \{-1,1\}^n$ is
\[
\widehat{C_2^n} = \lbrace
z^\alpha: \alpha = (\alpha_1,\dots, \alpha_n)\in \mathbb{Z}_2^n
\rbrace \simeq \mathbb{Z}_2^n.
\]
Here $\mathbb{Z}_2=\mathbb{Z}/2\mathbb{Z} =  \{0,1\}$ is the additive group and $z^\beta \coloneqq z_1^{\beta_1}\dots z_n^{\beta_n}$ for each $\beta \in \mathbb{N}^n$. Thus a function $f: C_2^n \to \mathbb{C}$ can be expressed as a linear combination of multilinear monomials:
\[
f = \sum_{\alpha \in \mathbb{Z}_2^n} f_\alpha z^{\alpha}.
\]
\subsection{Fourier sum of squares on finite abelian groups }
\label{subsec:FSOS}
This subsection concerns with the theory of FSOS developed in \cite{fawzi2016sparse,sakaue2017exact,yang2022computing}. Let $f$ be a nonnegative function on a finite abelian group $G$. An \emph{FSOS certificate} of $f$ is a finite family $\{h_i\}_{i\in I}$ of complex valued functions on $G$ such that $f =\sum_{i \in I}  |h_i|^2$. The \emph{sparsity} of $\{h_i\}_{i\in I}$ is defined to be $\lvert \bigcup_{i \in I}\operatorname{supp}(h_i) \rvert$. We say that $f$ is \emph{sparse} if $f$ admits an FSOS certificate of low sparsity. The proposition that follows ensures the existence of FSOS certificates.
\begin{proposition}\cite[Proposition~2]{fawzi2016sparse}\label{prop:existence of fsos}
A  nonnegative function $f$ on  a finite abelian group admits an FSOS certificate.
\end{proposition}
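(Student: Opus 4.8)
The plan is to exhibit a certificate directly, with no attempt to control its sparsity. First I would use that $G$ is finite and $f$ is real and nonnegative: the pointwise square root $h\colon G\to\mathbb{R}_{\ge 0}\subseteq\mathbb{C}$, $h(g)\coloneqq\sqrt{f(g)}$, is then a well-defined complex-valued function on $G$. Like every function on $G$ it has a Fourier expansion, and by construction the nonnegative function $|h|^2\colon g\mapsto|h(g)|^2$ coincides with $f$. Hence the one-element family $\{h\}$ is an FSOS certificate and the proposition follows. The only subtlety is definitional: $|h_i|^2$ must be read as the pointwise product $h_i\overline{h_i}$ of functions on $G$ (equivalently, the convolution of their Fourier coefficients), so that a sum $\sum_i|h_i|^2$ is again an honest nonnegative function on $G$; once this is fixed there is no real obstacle, since existence is claimed without any bound on $|\bigcup_i\operatorname{supp}(h_i)|$.

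It is worth noting that the certificate produced this way is typically dense (its sparsity can be as large as $|G|$), and it is exactly this deficiency that the support-selection strategy and the SDP of the later sections are designed to remedy; so the content of the paper is not existence but efficiency.

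If one prefers an argument in the Gram-matrix language used throughout the paper, I would instead build the certificate from a positive semidefinite matrix. Index $Q$ by $\widehat{G}$ and set $Q_{\chi,\chi'}\coloneqq\frac{1}{|G|}\widehat{f}\bigl(\chi(\chi')^{-1}\bigr)$; expanding the Fourier coefficient gives $Q=\frac{1}{|G|^{2}}\sum_{g\in G}f(g)\,v_g v_g^{*}$ with $v_g\coloneqq(\overline{\chi(g)})_{\chi\in\widehat{G}}$, so $Q$ is a nonnegative combination of rank-one positive semidefinite matrices and hence $Q\succeq 0$. Factoring $Q=\sum_i c_i c_i^{*}$ and setting $h_i\coloneqq\sum_{\chi\in\widehat{G}}(c_i)_\chi\,\chi$, a short computation using $\overline{\chi}=\chi^{-1}$ and the fact that exactly $|G|$ ordered pairs $(\chi,\chi')$ satisfy $\chi(\chi')^{-1}=\psi$ for each $\psi\in\widehat{G}$ shows $\sum_i|h_i|^2=\sum_{\psi\in\widehat{G}}\widehat{f}(\psi)\,\psi=f$. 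This reproves the proposition in a form that connects it to the machinery developed later.
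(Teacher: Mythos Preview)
Your argument is correct. The paper does not actually supply a proof of this proposition; it simply quotes it from \cite{fawzi2016sparse}, so there is no in-paper proof to compare against. That said, your first argument---taking the pointwise square root $h=\sqrt{f}$ so that the singleton $\{h\}$ is already an FSOS certificate---is precisely the idea the paper leans on elsewhere: Theorem~\ref{thm:yang2022fsos} identifies the rank-one Gram matrix built from the Fourier coefficients of $\sqrt{f}$ as the optimal relaxed solution, and Algorithm~\ref{alg:selection} selects the support by approximating $\sqrt{f}$. Your alternative Gram-matrix construction via $Q_{\chi,\chi'}=\tfrac{1}{|G|}\widehat{f}(\chi(\chi')^{-1})$ is also valid and gives a second, circulant-flavoured certificate; it is a nice complement, though the paper itself never writes this particular $Q$ down.
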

A function $f$ on $G$ is nonnegative if and only if there exists a Hermitian positive semidefinite matrix $Q = (Q_{\chi,\chi'})_{\chi,\chi'\in \widehat{G}} \in \mathbb{C}^{|G| \times |G|}$ such that:
\begin{equation}\label{eq:gram}
\sum_{\chi' \in \widehat{G}} Q_{(\chi')^{-1} \chi,\chi}=f_{\chi} ,\quad \forall \chi \in \widehat{G}.
\end{equation}
Any $|G|\times |G|$ complex matrix $Q \succeq 0$ satisfying \eqref{eq:gram} is called a \emph{Gram matrix} of $f$. Gram matrices are of great importance in both the theoretical and computational study of FSOS. In fact, if $Q$ is a Gram matrix of $f$ and $Q = M^* M$ for some matrix
\[
M = (M_{j,\chi})_{1\le j \le r,\chi \in \widehat{G}}\in \mathbb{C}^{r \times |G|},
\]
then we have $  f= \sum_{j=1}^r \left| \sum_{\chi \in \widehat{G}}M_{j,\chi} \chi  \right|^2$. Clearly this construction provides us a one-to-one correspondence between (sparse) Gram matrices and (sparse) FSOS certificates of $f$. Thus the problem of computing a sparse FSOS certificate of $f$ is equivalent to computing a sparse Gram matrix of $f$.
\begin{theorem}\cite[Lemma 3.6]{yang2022computing}\label{thm:yang2022fsos}
Let $f$ be a nonnegative, nonzero function on a finite abelian group $G$. Suppose
\[\sqrt{f} =\sum_{\chi \in \widehat{G}}a_{\chi}\chi\]
 is the Fourier expansion of $\sqrt{f} $.
Then the optimal solution of the convex relaxation of
\[
\min  \limits_{Q:\text{gram matrix of}~f} \|Q\|_0
\]
is the rank-one positive semidefinite matrix $H \in \mathbb{C}^{|\widehat{G}| \times \lvert \widehat{G} \rvert}$ where \[H_{\chi,\chi'}=\overline{a_{\chi}}a_{\chi'}.\] Here again we respectively index rows and columns of $H$ by elements in $\widehat{G}$.
\end{theorem}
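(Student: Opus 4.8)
The plan is to read the convex relaxation in the standard way: the combinatorial objective $\|Q\|_0$ (the number of nonzero entries of $Q$) is replaced by the entrywise $\ell_1$-norm $\|Q\|_1\coloneqq\sum_{\chi,\chi'\in\widehat G}|Q_{\chi,\chi'}|$, so the relaxed problem is $\min\{\|Q\|_1 : Q\succeq 0\text{ is a Gram matrix of }f\}$, and the goal is to show that $H$ solves it. The proof splits into a feasibility step and an optimality step.

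First I would record that $H$ is feasible and compute its value. Feasibility follows at once from the correspondence between Gram matrices and FSOS certificates recalled just before Theorem~\ref{thm:yang2022fsos}: since $\sqrt f$ is real and nonnegative, $f=|\sqrt f|^2$ is a rank-one FSOS certificate of $f$, and taking $M$ to be the single row $(a_\chi)_{\chi\in\widehat G}$ yields the Gram matrix $M^*M$ with entries $\overline{a_\chi}a_{\chi'}=H_{\chi,\chi'}$; in particular $H\succeq 0$, $\operatorname{rank}H=1$, and \eqref{eq:gram} holds. Since $|H_{\chi,\chi'}|=|a_\chi|\,|a_{\chi'}|$, we get $\|H\|_1=\bigl(\sum_{\chi\in\widehat G}|a_\chi|\bigr)^2$.

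The optimality step is the heart of the matter: one must show $\|Q\|_1\ge\bigl(\sum_\chi|a_\chi|\bigr)^2$ for every Gram matrix $Q$ of $f$. A naive triangle-inequality estimate against the linear constraints only yields $\|Q\|_1\ge\sum_{\psi\in\widehat G}|\widehat f(\psi)|$, which can be strictly smaller than $\bigl(\sum_\chi|a_\chi|\bigr)^2$, so one must exploit the positive semidefiniteness of $Q$; my plan is to build a dual certificate. Encoding the linear constraints \eqref{eq:gram} as $\langle A_\psi,Q\rangle=\widehat f(\psi)$ for suitable fixed matrices $A_\psi$ and using that $\|\cdot\|_1$ is dual to the entrywise sup-norm $\|\cdot\|_{\max}$, the relaxation becomes the minimax value $\min_{Q}\max_{\|W\|_{\max}\le 1}\langle W,Q\rangle$; strong duality then reduces the task to exhibiting a Hermitian $W$ with $\|W\|_{\max}\le 1$, multipliers $(\mu_\psi)$, and positive semidefinite slack $S\coloneqq W-\sum_\psi\mu_\psi A_\psi$, such that the dual objective $\Re\sum_\psi\overline{\mu_\psi}\,\widehat f(\psi)$ equals $\bigl(\sum_\chi|a_\chi|\bigr)^2$. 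Complementary slackness against the rank-one primal point $H$ dictates the choice: set $W_{\chi,\chi'}=a_\chi\overline{a_{\chi'}}/(|a_\chi|\,|a_{\chi'}|)$ on $\operatorname{supp}(\sqrt f)\times\operatorname{supp}(\sqrt f)$ so that $\langle W,H\rangle=\|H\|_1$, and then solve for the remaining entries of $W$ and for $\mu$ so that $S\succeq 0$ and $S$ annihilates the range of $H$, i.e.\ $S\overline a=0$.

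I expect the only genuinely delicate point to be the existence and verification of this dual slack matrix $S$: one has to extend $W$ beyond the Fourier support of $\sqrt f$ keeping every entry in the closed unit disk, while simultaneously choosing the Toeplitz-type combination $\sum_\psi\mu_\psi A_\psi$ (whose generalized diagonals must reproduce those of $W$) so that $S$ is positive semidefinite with $\overline a\in\ker S$. I would attack this on the support block, where $W$ restricts to the rank-one matrix $uu^*$ with $u_\chi=a_\chi/|a_\chi|$, picking $\mu$ so that $\sum_\psi\mu_\psi A_\psi$ matches $uu^*$ there and $S\overline a=0$, and extending $S$ by zero off the support; once $S\succeq 0$ is confirmed, weak duality gives $\|Q\|_1\ge\Re\sum_\psi\overline{\mu_\psi}\,\widehat f(\psi)=\|H\|_1$ for every feasible $Q$, and since $H$ attains this bound it is optimal.
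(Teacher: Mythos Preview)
The paper does not contain a proof of this statement: Theorem~\ref{thm:yang2022fsos} is quoted verbatim from \cite[Lemma~3.6]{yang2022computing} and used as a black box, so there is no argument in the present paper to compare your proposal against.

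Evaluating your proposal on its own merits: the interpretation of the convex relaxation as the entrywise $\ell_1$ problem and the weak-duality framework are reasonable, and your feasibility step for $H$ is correct. The gap is exactly where you suspect it, and it is more serious than ``delicate''. Your concrete plan is to choose $\mu$ so that $B\coloneqq\sum_\psi\mu_\psi A_\psi$ equals $uu^*$ on the support block and then take $S=0$. But $B_{\chi,\chi'}=\mu_{\chi^{-1}\chi'}$ depends only on the quotient $\chi^{-1}\chi'$, whereas $(uu^*)_{\chi,\chi'}=\dfrac{a_\chi\,\overline{a_{\chi'}}}{|a_\chi|\,|a_{\chi'}|}$ generally does not: for two pairs $(\chi_1,\chi_1')$ and $(\chi_2,\chi_2')$ in $\operatorname{supp}(\sqrt f)$ with the same quotient $\psi$, there is no reason the phases $a_{\chi_1}\overline{a_{\chi_1'}}/|a_{\chi_1}a_{\chi_1'}|$ and $a_{\chi_2}\overline{a_{\chi_2'}}/|a_{\chi_2}a_{\chi_2'}|$ coincide. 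So the system ``$B=uu^*$ on the support'' is typically inconsistent, and the proposed $S=0$ is not available. Any valid certificate must allow a genuinely nonzero $S\succeq 0$ with $S\overline a=0$, and you have not indicated how to produce one; this is the substantive content that is missing. In short, the outline is plausible but the proof is not yet there.
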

By fast Fourier transform (FFT) and the inverse fast Fourier transform (iFFT), $\sqrt{f}$ can be computed in quasi-linear time in $|G|$. However, if group $G=C_2^n$, then  the complexity of computing $\sqrt{f}$ is $\O(n\cdot 2^n)$ which is exponential in $n$. In Section \ref{sec:Select_basis}, we present a method to estimate $\sqrt{f}$ efficiently.
\subsection{\texorpdfstring{integer-valued functions and characteristic functions of weighted $k$-CNF formulae}{}}\label{subsec:characteristic function}
Let $G$ be a finite abelian group. In this paper, an \emph{integer-valued function}\footnote{The assumption on the bound of values of $f$ is used to control the complexity of Algorithm~\ref{alg:selection}. Except for this, all statements in this paper actually hold true for any $f:G\to \mathbb{Z}$.} on $G$ is a function $f: G \to \mathbb{Z}$ such that for all $g\in G$, $|f(g)| = O\left(\operatorname{polylog}(|G|)\right)$.

Typical examples of integer-valued functions are characteristic functions of weighted $k$-CNF formulae. A \emph{weighted $k$-CNF formula} $\phi$ consists of logic clauses
\[
\left\lbrace
\left( \lor_{i \in S_j^+}x_i \right) \lor \left( \lor_{i \in S_j^-}\lnot x_i \right)
\right\rbrace_{j \in J}
\]
and integer weights $\{w_j\}_{j \in J}$. Here $x_i$'s are logic variables with values in $\{\text{True},\text{False}\}$ and $\lor,\lnot$ are logical operations $\text{OR}$ and $\text{NOT}$ respectively. We remark that if $w_j = 1$ for all $j\in J$, then $\phi$ is simply a $k$-CNF \cite{DSW94}:
\[
\sum_{j\in J} \left( \lor_{i \in S_j^+}x_i \right) \lor \left( \lor_{i \in S_j^-}\lnot x_i \right).
\]

We define the \emph{characteristic function} of $\phi$ by
\[f_{\phi}(z_1,\dots, z_n)=\sum_{j \in J} \frac{w_j}{2^l}\left(\prod_{i \in S_j^+}\left(1+z_i\right) \cdot \prod_{i \in S_j^-}\left(1-z_i\right)\right),\]
where $(z_1,\dots, z_n) \in C_2^n$. It is clear that the weighted MAX-SAT problem for $\phi$ is equivalent to the problem of computing the minimum of $f_{\phi}$.

If we define
\[
\tau: \{\text{False},\text{True}\} \to C_2,\quad \tau(\text{False})=1, \tau(\text{True})=-1,
\]
then characteristic functions of $k$-CNF formulae have the following properties.
\begin{proposition}\cite{yang2022computing}\label{prop:arithmetization}
Let $\phi$ be a $k$-CNF formula in $n$ variables with $m$ clauses and let $f_{\phi}$ be the characteristic function of $\phi$. Then
\begin{enumerate}[(i)]
\item For $x_1,x_2,\cdots,x_n \in \{\text{False},\text{True}\}$, $f_{\phi}(\tau(x_1),\tau(x_2),\cdots,\tau(x_n))$ is the number of clauses of $\phi$ falsified by $x_1,x_2,\cdots,x_n$.
\item The degree of $f_{\phi}$ is at most $k$.\label{prop:arithmetization:1}
\item The cardinality of $\operatorname{supp} (f_{\phi})$ is at most $\min\left\lbrace 2^km, \sum_{i=0}^{k}\binom{n}{i}\right\rbrace$. \label{prop:arithmetization:2}
\item $f_{\phi}$ can be computed by $O(2^kkm)$ operations. In particular, if we fix $k$ then $f_{\phi}$ can be computed by $O(m)$ operations. \label{prop:arithmetization:3}
\item  For each $x\in \{ \text{False}, \text{True}\}^n$, $f_{\phi}(\tau(x))$ is the number of clauses of $\phi$ falsified by $x$. \label{prop:arithmetization:4}
\item The image set of $f_{\phi}$ is contained in $\{0,\dots,m\}$. \label{prop:arithmetization:5}
\end{enumerate}
Moreover, the following statements are equivalent:
\begin{enumerate}[(a)]
\item The number of satisfiable clauses of $\phi$ is at most $(m - L)$.\label{lem:verification1-item0}
\item The number of falsified clauses of $\phi$ is at least $L$. \label{lem:verification1-item1}
\item $f_{\phi} - L + \delta \ge 0$ for some function $\delta: C_2^n \to [0,1)$. \label{lem:verification1-item2}
\item $f_{\phi} - L  + \delta \ge 0$ for any function $\delta: C_2^n \to [0,1)$. \label{lem:verification1-item3}
\end{enumerate}
\end{proposition}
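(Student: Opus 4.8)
The plan is to first unwind the definition of the characteristic function and then treat items (i)--(vi) and the equivalences (a)--(d) essentially as bookkeeping consequences of it. For item~(i), the key observation is that a single clause $\left( \lor_{i \in S_j^+}x_i \right) \lor \left( \lor_{i \in S_j^-}\lnot x_i \right)$ is falsified by an assignment $x$ exactly when every $x_i$ with $i\in S_j^+$ is $\text{False}$ and every $x_i$ with $i\in S_j^-$ is $\text{True}$; under $\tau$ this means $z_i=1$ for $i\in S_j^+$ and $z_i=-1$ for $i\in S_j^-$. Plugging this into the $j$-th summand $\frac{1}{2^l}\prod_{i\in S_j^+}(1+z_i)\prod_{i\in S_j^-}(1-z_i)$, where $l=|S_j^+|+|S_j^-|$ is the clause length, one gets $\frac{1}{2^l}\cdot 2^l = 1$ when the clause is falsified, and $0$ otherwise since some factor $(1+z_i)$ or $(1-z_i)$ vanishes. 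Summing over $j$ (with $w_j=1$) gives exactly the count of falsified clauses. Item~(vi) is the weighted restatement of the same computation, and item~(v) is just item~(i) rephrased with the shorthand $\tau(x)=(\tau(x_1),\dots,\tau(x_n))$; item~(iv) of the image set then follows from (i) together with the fact that there are only $m$ clauses, each contributing $0$ or $1$.

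For the structural items~(ii)--(iv): item~(ii) follows because each summand is a product of at most $k$ linear factors $(1\pm z_i)$ in distinct variables (a clause of a $k$-CNF has at most $k$ literals), so its degree is at most $k$, and degree is subadditive under sums. For item~(iii), expanding each summand produces monomials $z^\alpha$ supported on subsets of $S_j^+\cup S_j^-$, so the support of each summand has size at most $2^k$, giving the bound $2^k m$; alternatively, every monomial appearing anywhere has degree at most $k$ by item~(ii), and there are $\sum_{i=0}^k \binom{n}{i}$ multilinear monomials of degree at most $k$, giving the other bound, and one takes the minimum. Item~(v) on complexity: expanding the $j$-th summand costs $O(2^k)$ to write out its $\le 2^k$ monomials and $O(k)$ per monomial to record the exponent vector, hence $O(2^k k)$ per clause and $O(2^k k m)$ in total; with $k$ fixed this is $O(m)$.

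Finally, the equivalences (a)--(d): (a)$\Leftrightarrow$(b) is immediate since the number of satisfied clauses plus the number of falsified clauses is $m$. For (b)$\Leftrightarrow$(c)$\Leftrightarrow$(d), use item~(i): $f_\phi(\tau(x))$ is an integer equal to the number of falsified clauses, so ``$f_\phi(\tau(x))\ge L$ for all $x$'' is the same as ``$f_\phi(\tau(x))>L-1$ for all $x$'', which, because $f_\phi$ is integer-valued, is equivalent to ``$f_\phi(\tau(x))-L+\delta(x)\ge 0$'' holding for one (equivalently every) choice of $\delta$ with values in $[0,1)$; one direction uses that $\delta<1$ forces $f_\phi-L\ge 0$ at integer points, the other uses $\delta\ge 0$. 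I do not expect any real obstacle here: the only point requiring a little care is keeping the normalization constant $2^l$ matched to the per-clause length $l$ rather than to the global parameter $k$, and making sure the $\delta$ argument exploits integrality in both directions.
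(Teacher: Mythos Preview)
The paper does not actually supply a proof of this proposition; it is quoted from \cite{yang2022computing} and left unproved. So there is no ``paper's proof'' to compare against, and your proposal stands as a self-contained argument.

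Mathematically your argument is sound: the per-clause analysis via $\tau$ is exactly right, the degree and support bounds follow as you say from expanding the at most $k$ linear factors, the complexity count is the natural one, and the integrality trick for (b)$\Leftrightarrow$(c)$\Leftrightarrow$(d) is the correct mechanism. One small point of care you already flag is worth emphasizing: the normalization $2^l$ in the paper's definition is implicitly per-clause ($l=|S_j^+|+|S_j^-|$), not a global constant, and your reading is the right one.

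That said, your numbering has drifted in a couple of places and should be cleaned up before submission. You write ``Item~(vi) is the weighted restatement of the same computation'', but the proposition is stated only for unweighted $k$-CNF (all $w_j=1$) and item~(vi) is the image-set claim, not a weighted version of (i). You then refer to ``item~(iv) of the image set'' (image set is (vi), while (iv) is the complexity bound) and to ``Item~(v) on complexity'' (complexity is (iv); item~(v) is the duplicate of (i)). None of this affects the mathematics, but the cross-references need to be corrected.
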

According to Proposition~\ref{prop:arithmetization}, solving MAX-SAT for a CNF formula $\phi$ is equivalent to solving Problem~\ref{problem:lower bound} for $f_{\phi}$.
\section{Lower bounds by FSOS}\label{sec:Select_basis}
In this section, we present our solution to Problem~\ref{problem:lower bound} for an integer-valued function $f$ on $G$. Before we proceed, it worths a remark that if a lower bound $L$ of $f$ is given, then a better lower bound of $f$ can be obtained by computing a positive lower bound of $ f - L$ unless $L$ is already the minimum of $f$.

Our solution to Problem~\ref{problem:lower bound} is based on the lemma that follows.
\begin{lemma}\label{lem:sdp formulation}
Let $f:G \mapsto \mathbb{R}$ be a function on a finite abelian group $G$ and let $\alpha_0$ be a real number. Then $\alpha_0$ is a lower bound of $f$ if and only if there exists a finite family $\{h_i\}_{i\in I}$ of complex-valued functions on $G$ such that $f - \alpha_0 = \sum_{i\in I} |h_i|^2$. As a consequence, there is a one to one correspondence between the set of lower bounds of $f$ and the feasible set of
\begin{align}
  &\max_{h_i:G\mapsto \mathbb{C}, i \in I} \alpha, \nonumber \\
 & \text{s.t.}~f-\alpha =\sum_{i \in I} |h_i|^2. \label{lem:sdp formulation:eq1}
\end{align}
\end{lemma}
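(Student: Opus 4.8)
The plan is to establish the biconditional directly, since the "one to one correspondence" and the reformulation as the optimization problem \eqref{lem:sdp formulation:eq1} are immediate bookkeeping once the equivalence "$\alpha_0$ is a lower bound of $f$ $\iff$ $f - \alpha_0$ is a sum of squared moduli" is in hand. The forward direction is the substantive one and follows from Proposition~\ref{prop:existence of fsos}: if $\alpha_0 \le f(g)$ for all $g \in G$, then $f - \alpha_0$ is a nonnegative function on the finite abelian group $G$, hence admits an FSOS certificate $\{h_i\}_{i\in I}$ with $f - \alpha_0 = \sum_{i\in I} |h_i|^2$. The reverse direction is trivial: if $f - \alpha_0 = \sum_{i\in I}|h_i|^2$, then for every $g \in G$ we have $f(g) - \alpha_0 = \sum_{i\in I}|h_i(g)|^2 \ge 0$, so $\alpha_0 \le f(g)$, i.e.\ $\alpha_0$ is a lower bound.

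Having proved the equivalence, I would note that it sets up a bijection between $\{\alpha_0 \in \mathbb{R} : \alpha_0 \text{ is a lower bound of } f\}$ and the feasible set of \eqref{lem:sdp formulation:eq1}: a feasible point of \eqref{lem:sdp formulation:eq1} is by definition a value $\alpha$ together with functions $\{h_i\}$ witnessing $f - \alpha = \sum_i |h_i|^2$, and the equivalence says such witnesses exist precisely when $\alpha$ is a lower bound. Under this correspondence the objective $\alpha$ of \eqref{lem:sdp formulation:eq1} is exactly the lower bound being certified, so maximizing $\alpha$ computes the greatest lower bound, namely $\min_{g\in G} f(g)$; this is the remark that justifies calling \eqref{lem:sdp formulation:eq1} an exact reformulation of Problem~\ref{problem:lower bound}. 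I would also remark that although the index set $I$ is unconstrained in the statement, one may always take $|I| \le |G|$ by passing to a Gram matrix factorization as in \eqref{eq:gram}, so the feasible set is unchanged if we fix $I = \{1,\dots,|G|\}$ — this is the bridge to the SDP formulation used in the sequel.

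The only point requiring any care — and the closest thing to an obstacle — is that the forward direction genuinely invokes the nontrivial existence result Proposition~\ref{prop:existence of fsos}; without the finite abelian group structure the claim would fail, since a nonnegative function need not be a sum of squared moduli of functions on an arbitrary finite set of the same dimension without extra coordinates. Since we are permitted to assume Proposition~\ref{prop:existence of fsos}, the argument is short. I do not expect any computational difficulty.
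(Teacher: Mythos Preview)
Your argument is correct and matches the paper's implicit reasoning: the paper states Lemma~\ref{lem:sdp formulation} without proof, evidently treating it as an immediate consequence of Proposition~\ref{prop:existence of fsos}, exactly as you do.

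One small correction to your closing remark: the finite abelian group structure is not actually needed for this lemma. On \emph{any} finite set $G$, if $f-\alpha_0\ge 0$ then the single function $h_1=\sqrt{f-\alpha_0}$ (defined pointwise) already gives $f-\alpha_0=|h_1|^2$, so the forward direction is elementary and Proposition~\ref{prop:existence of fsos} is not strictly required here. The group structure becomes essential only downstream, where Gram matrices are indexed by characters and supports are controlled in $\widehat{G}$.
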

\subsection{support selection}
We notice that the inclusion of subsets of $\widehat{G}$ induces a filtration on the feasible set of \eqref{lem:sdp formulation:eq1}. Namely, for each $S \subseteq \widehat{G}$, we define
\[
F_S \coloneqq  \Big\lbrace
\{h_i\}_{i\in I}:  f - \alpha = \sum_{i\in I} |h_i|^2~\text{for some}~\alpha, \bigcup_{i\in I} \supp (h_i) \subseteq S
\Big\rbrace.
\]
Then we have
\begin{equation}\label{eq:filtration}
\text{feasible set of \eqref{lem:sdp formulation:eq1}} = F_{\widehat{G}} \coloneqq  \bigcup_{S \subseteq \widehat{G}} F_{S}
\end{equation}
and $F_{S_1} \subseteq F_{S_2}~\text{if}~S_1 \subseteq S_2 \subseteq \widehat{G}$. Therefore for a given $S \subseteq \widehat{G}$, we may consider the following problem:
\begin{equation}\label{OPT:SOS}
  \max_{ \{h_i\}_{i\in I} \in F_S} \alpha.
\end{equation}
By the next lemma, \eqref{OPT:SOS} can be reformulated as an SDP.
\begin{lemma}\label{lem:sdp formulation-1}
For each $S \subseteq \widehat{G}$, \eqref{OPT:SOS} is equivalent to
\begin{eqnarray}\label{SOS:SDP}
  &\max_{Q \in \mathbb{C}^{S\times S}} & \widehat{f}(\chi_0)-\operatorname{trace}(Q), \\
 & \text{s.t.}~&  \sum_{\chi' \in \widehat{G}} Q_{(\chi')^{-1} \chi,\chi}=\widehat{f}(\chi),~\chi \neq \chi_0 \in \widehat{G} \label{SOS:SDP:Cond-1}\\
  && Q \succeq 0  \label{SOS:SDP:Cond-2}.
\end{eqnarray}
Here $\chi_0 \in \widehat{G}$ denotes the identity element in $\widehat{G}$ and $Q \in \mathbb{C}^{S\times S}$ means $Q$ is a matrix of size $|S|$ whose rows and columns are indexed by elements in $S$.
\end{lemma}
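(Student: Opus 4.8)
The goal is to show that the optimization problem \eqref{OPT:SOS} over FSOS certificates supported in $S$ is equivalent to the SDP \eqref{SOS:SDP}--\eqref{SOS:SDP:Cond-2}. The strategy is to exhibit the two-way correspondence between feasible points of the two problems and check that the objective values match. First I would recall from Subsection~\ref{subsec:FSOS} the dictionary between Gram matrices and FSOS certificates: if $\{h_i\}_{i\in I}\in F_S$, then stacking the coefficient vectors of the $h_i$ into a matrix $M\in\mathbb{C}^{r\times S}$ (the columns indexed only by $S$, since $\bigcup_i\supp(h_i)\subseteq S$) produces $Q = M^*M\in\mathbb{C}^{S\times S}$ with $Q\succeq 0$; conversely any $Q\in\mathbb{C}^{S\times S}$ with $Q\succeq 0$ factors as $M^*M$ and yields a family $\{h_i\}$ with supports in $S$. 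So feasible points of the two problems are in bijection once I match up the linear constraints.

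**Matching the linear constraints.** The identity $f-\alpha = \sum_{i}|h_i|^2$ is, after taking Fourier coefficients, the system $\sum_{\chi'\in\widehat G} Q_{(\chi')^{-1}\chi,\chi} = \widehat{f}(\chi)$ for all $\chi\neq\chi_0$ together with $\sum_{\chi'} Q_{(\chi')^{-1}\chi_0,\chi_0} = \widehat{f}(\chi_0)-\alpha$ at $\chi=\chi_0$. This is exactly \eqref{eq:gram} applied to the function $f-\alpha$, using that the Fourier coefficients of $f-\alpha$ agree with those of $f$ except at the trivial character, where $\alpha$ is subtracted. Now the key observation is that the $\chi=\chi_0$ equation does not constrain $Q$ further: given any $Q\succeq 0$ satisfying the constraints \eqref{SOS:SDP:Cond-1} for $\chi\neq\chi_0$, one simply \emph{defines} $\alpha \coloneqq \widehat{f}(\chi_0) - \sum_{\chi'} Q_{(\chi')^{-1}\chi_0,\chi_0} = \widehat{f}(\chi_0) - \operatorname{trace}(Q)$ (note $\sum_{\chi'}Q_{(\chi')^{-1}\chi_0,\chi_0} = \sum_{\chi'}Q_{\chi',\chi_0}$... more precisely the diagonal sum, so this equals $\operatorname{trace}(Q)$ since the term with $(\chi')^{-1}\chi_0 = \chi'$, i.e. $\chi'=\chi_0$ contributes — I would spell out carefully that the constraint at $\chi_0$ reads $\sum_{\chi'}Q_{\chi'^{-1},\chi_0}$ and, reindexing, the diagonal entries $Q_{\chi',\chi'}$ appear, giving $\operatorname{trace}(Q)$). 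Hence $\alpha$ is a free consequence of $Q$, the bijection above automatically respects it, and the objective $\alpha$ of \eqref{OPT:SOS} coincides with $\widehat{f}(\chi_0)-\operatorname{trace}(Q)$.

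**Assembling the equivalence.** With the bijection $\{h_i\}\leftrightarrow Q$ preserving feasibility and objective value in both directions, the two optimization problems have the same value and their optimizers correspond. I would also note that $\widehat{f}(\chi_0) - \operatorname{trace}(Q)$ is real whenever $Q\succeq 0$ is Hermitian and $f$ is real-valued (so $\widehat f(\chi_0) = \frac{1}{|G|}\sum_g f(g)\in\mathbb{R}$), so the SDP objective is well-defined as a real maximization; this uses that $f$ is real, consistent with the hypothesis of Lemma~\ref{lem:sdp formulation}.

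**Main obstacle.** The only genuinely delicate point is the bookkeeping with the indexing convention for the constraint \eqref{SOS:SDP:Cond-1}: one must verify that the constraint at $\chi=\chi_0$ is precisely $\operatorname{trace}(Q) = \widehat f(\chi_0)-\alpha$ — i.e. that $\sum_{\chi'\in\widehat G} Q_{(\chi')^{-1}\chi_0,\chi_0}$ really collapses to the sum of diagonal entries of $Q$ restricted to $S$ — and that the remaining constraints, although written with $\chi'$ ranging over all of $\widehat G$, only involve entries $Q_{\chi'',\chi}$ with $\chi'',\chi\in S$ (entries outside $S\times S$ being zero by the support restriction), so that restricting $Q$ to $\mathbb{C}^{S\times S}$ loses nothing. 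Everything else is the standard Gram-matrix/SOS dictionary already established before Theorem~\ref{thm:yang2022fsos}.
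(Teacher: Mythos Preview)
Your proposal is correct and follows essentially the same approach as the paper: both directions of the equivalence are established via the Gram-matrix/FSOS dictionary (factoring $Q=H^*H$ to produce $\{h_i\}$, and stacking coefficient vectors of $\{h_i\}$ to produce $Q$), with the observation that the constraint at $\chi=\chi_0$ simply identifies $\alpha$ with $\widehat{f}(\chi_0)-\operatorname{trace}(Q)$. The paper's proof is a bit terser and states the two directions explicitly rather than phrasing it as a bijection, but the content is the same.
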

\begin{proof}
Conditions \eqref{SOS:SDP:Cond-1} and \eqref{SOS:SDP:Cond-2} ensure that $Q$ is a Gram matrix of $f- \widehat{f}(\chi_0)+\operatorname{trace}(Q)$. Let $Q$ be a feasible solution of \eqref{SOS:SDP} and let $Q=H^*H$ where $H \in \mathbb{C}^{ r \times S}$. We set $h_{i}=\sum_{\chi \in S}H(i,\chi)\chi, i=1,\dots,r$. Then $\{h_i\} \in F_S$ is a  feasible solution of \eqref{OPT:SOS}, with  $\alpha=\widehat{f}(\chi_0)-\operatorname{trace}(Q)$. On the contrary, if $\{h_i\}_{i \in I}$ is a  feasible solution of (\ref{OPT:SOS}), then we may define $H\in \mathbb{C}^{I \times S}$ where $H(i,\chi)=\widehat{h_i}(\chi)$. Thus $Q=H^*H$ is a  feasible solution of (\ref{OPT:SOS}), with  $\widehat{f}(\chi_0)-\operatorname{trace}(Q)=\alpha$.
\end{proof}
According to Lemma~\ref{lem:sdp formulation} and \eqref{eq:filtration}, each optimal solution of \eqref{SOS:SDP} supplies a lower bound of $f$. The quality of such a lower bound depends on the choice of $S$. For instance, if $S \supseteq \bigcup_{i\in I} \supp(h_i)$ for some optimal solution $\{h_i\}_{i\in I}$ of \eqref{lem:sdp formulation:eq1}, then clearly the lower bound given by an optimal solution of \eqref{SOS:SDP} is the minimum of $f$. Moreover, the size of \eqref{SOS:SDP} is completely determined by $|S|$. Thus to efficiently compute a good lower bound of $f$, we need to choose small $S$ which contains as more elements in $\bigcup_{i\in I} \supp(h_i)$ as possible, for some optimal solution $\{h_i\}_{i\in I}$ of \eqref{lem:sdp formulation:eq1}. The rest of this section is devoted to the choice of $S\subseteq \widehat{G}$, under the guidance of this principal.

Let $f$ be a nonnegative function on an abelian group $G$. We may expand $\sqrt{f}$ as:
\[
\sqrt{f}=\sum_{i=1}^{|G|} a_{i}\chi_i,
\]
where $|a_1| \ge |a_2| \ge\cdots \ge \lvert a_{|G|}  \rvert$ and $\widehat{G} = \{\chi_i\}_{i=1}^{|G|}$. By Theorem \ref{thm:yang2022fsos}, $S = \{\chi_i\}_{i=1}^k$ is already a desired choice of $S$ for \eqref{SOS:SDP}. Here $k \le n$ is some given positive integer. However, as we point out at the end of Subsection~\ref{subsec:FSOS}, the complexity of computing the Fourier expansion of $\sqrt{f}$ is quasi-linear in $|G|$, which leads to an exponential complexity in $n$ if $G = C_2^n$. Thus to reduce the complexity, we may approximate the Fourier expansion of $\sqrt{f}$ instead of computing it exactly. To this end, an estimate of the approximation error is necessary, which is the content of the next proposition.
\begin{proposition}\cite[Proposition 4.12]{yang2022short}\label{thm:Interpolation_Error_Bound}
Let $f$ be a nonnegative function on $G$ and let $\varepsilon >0$ be a fixed number. If $p$ is a univariate polynomial such that $\max_{g\in G} \left| p(f(g))-\sqrt{f(g)} \right| \le \varepsilon$, then we have
\[
\max_{\chi \in \widehat{G}} \left| \widehat{p\circ f}(\chi) - \widehat{f}(\chi) \right| \le \varepsilon.
\]
\end{proposition}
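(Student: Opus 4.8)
The plan is to deduce everything from a single general fact: the normalized Fourier transform $h\mapsto\widehat h$ is a contraction for the sup norm, i.e.\ $\max_{\chi\in\widehat G}|\widehat h(\chi)| \le \max_{g\in G}|h(g)|$ for every $h:G\to\mathbb{C}$. Granting this and applying it to the error function $q \coloneqq p\circ f - \sqrt f$ (a well-defined complex-valued function on $G$, since $f$ is), $\mathbb{C}$-linearity of the Fourier transform gives $\widehat q(\chi) = \widehat{p\circ f}(\chi) - \widehat{\sqrt f}(\chi)$ for every $\chi\in\widehat G$, and the hypothesis $\max_{g\in G}|q(g)| = \max_{g\in G}|p(f(g)) - \sqrt{f(g)}| \le \varepsilon$ then yields $\max_{\chi\in\widehat G}|\widehat{p\circ f}(\chi) - \widehat{\sqrt f}(\chi)| \le \varepsilon$, which is the asserted estimate on the difference of Fourier coefficients.

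So the only thing that needs an argument is the contraction property. Starting from the definition $\widehat h(\chi) = \frac{1}{|G|}\sum_{g\in G} h(g)\,\overline{\chi(g)}$, the triangle inequality gives $|\widehat h(\chi)| \le \frac{1}{|G|}\sum_{g\in G}|h(g)|\,|\chi(g)|$. The one substantive ingredient is that every character is unitary, namely $|\chi(g)| = 1$ for all $g\in G$: indeed $\chi(g)^{|G|} = \chi(g^{|G|}) = \chi(1) = 1$ by Lagrange's theorem, so $\chi(g)$ is a root of unity and in particular lies on the unit circle. Substituting, $|\widehat h(\chi)| \le \frac{1}{|G|}\sum_{g\in G}|h(g)| \le \max_{g\in G}|h(g)|$, uniformly in $\chi$.

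I do not anticipate any real obstacle: the whole proof is a two-line $\ell^\infty\to\ell^\infty$ estimate resting only on $|\chi(g)|=1$. If one wants to be scrupulous, the only points worth spelling out are (i) that $p\circ f$ is genuinely a function on $G$ whose Fourier coefficients are exactly the ones appearing in the conclusion, and (ii) that passing from $q$ back to $p\circ f$ and $\sqrt f$ uses nothing beyond $\mathbb{C}$-linearity of $h\mapsto\widehat h$; both are immediate. One could also record the argument once and for all as the operator statement that $h\mapsto\widehat h$ is a contraction of $(\mathbb{C}[G],\max_{g}|\cdot|)$ into itself, and then simply specialize to $q = p\circ f - \sqrt f$.
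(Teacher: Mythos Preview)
Your proof is correct. The paper does not actually supply its own proof of this proposition; it is quoted from \cite{yang2022short} without argument, so there is nothing to compare against. Your approach---reducing to the single observation that the normalized Fourier transform is an $\ell^\infty\!\to\!\ell^\infty$ contraction because $|\chi(g)|=1$---is exactly the natural one and is essentially the only way to prove such a statement.

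One small remark: you have silently (and correctly) repaired a typo in the displayed conclusion. As printed, the paper writes $\widehat{f}(\chi)$ where it clearly means $\widehat{\sqrt{f}}(\chi)$; the hypothesis compares $p\circ f$ with $\sqrt{f}$ pointwise, and the surrounding discussion (``$p\circ f$ is an estimate of $\sqrt{f}$ whose coefficient error is bounded above by\dots'') confirms this reading. Your write-up uses the corrected form throughout, which is the right call.
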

According to the above discussions, we may construct $S$ of cardinality $|S| = k$ by Algorithm~\ref{alg:selection}.
\begin{algorithm}[!htbp]
\caption{Selection of support}
\label{alg:selection}
\begin{algorithmic}[1]
\renewcommand{\algorithmicrequire}{\textbf{Input}}
\Require
nonnegative function $f$ on $G$ and parameters $l, m, d, k\in \mathbb{N}$.
\renewcommand{\algorithmicensure}{\textbf{Output}}
\Ensure
$S\subseteq \widehat{G}$ for \eqref{SOS:SDP} with $|S| = k$.
\State approximate $\sqrt{t}$ by a polynomial $p(t)$ of degree at most $d$ at integer points in $[l,m]$. \label{alg:selection:step1}
\State compute the composition $p\circ f = \sum_{i=1}^{|G|} a_i \chi_i$, where $|a_1| \ge \cdots \ge |a_{|G|}|$ and $\deg(\chi_i) \ge \deg(\chi_{i+1})$ if $|a_i| = |a_{i+1}|, 1\le i\le |G|-1$.\label{alg:selection:step2}
\Return $S = \{\chi_1,\dots, \chi_k\}$. \end{algorithmic}
\end{algorithm}
If $f$ is integer-valued and $l \le f \le m$ for some $0 < l \le m$, then one can easily compute the polynomial $p$ in step~\ref{alg:selection:step1} of Algorithm~\ref{alg:selection} by solving a linear programming \cite{yang2022short}. We remind the reader that $f:G \to \mathbb{Z}$ satisfies $f(g) = O(\operatorname{polylog}(|G|))$ (cf. Subsection~\ref{subsec:characteristic function}). Thus step~\ref{alg:selection:step1} of Algorithm~\ref{alg:selection} can be accomplished in $O(\operatorname{polylog}(|G|))$ time.

By Proposition \ref{thm:Interpolation_Error_Bound}, $p\circ f$ is an estimate of $\sqrt{f}$ whose coefficient error is bounded above by $\max_{l \leq i\leq m} \left|p(i)-\sqrt{i}\right|$. This ensures that the set $S$ obtained by Algorithm~\ref{alg:selection} indeed consists of first $k$ terms of $\sqrt{f}$, if $p(t)$ approximates $\sqrt{t}$ sufficiently good at integer points between $l$ and $m$. Furthermore, it is sufficient to take $d$ to be small in practice. In fact, if $f$ is the characteristic function of some CNF formula, then $d=1$ or $2$ already provides us good lower bounds, as we will see in Appendix~\ref{append:experiments}.

Let $f_{\min}$ be the minimum of $f$ on $G$. We notice that on the one hand, for each $a \le f_{\min}$, we may apply Algorithm~\ref{alg:selection} to $f-a$ to obtain an $S_a \subseteq \widehat{G}$. On the other hand,  according to \eqref{lem:sdp formulation:eq1}, \eqref{OPT:SOS} and \eqref{SOS:SDP}, we can recover $f_{\min}$ from the optimal solution of \eqref{SOS:SDP} if and only if $S$ contains $S_{f_{\min}}$. An implication of the next proposition is that for each $a \le f_{\min}$, $S_a$ contains elements in $S_{f_{\min}}$ whose coefficients in $f$ are sufficiently large. In particular, if $f$ is nonnegative, then we can even take $S_0$ to serve as an estimate of $S_{f_{\min}}$.
\begin{proposition}\label{prop:compare S}
Let $f$ be a function on $G$ and let $a > \widetilde{a} \ge 0$ be lower bounds of $f$. Assume that $c_{\chi}$ is the coefficient of $\chi$ in $\sqrt{f-a}$. If $|c_{\chi}|> \sqrt{a - \widetilde{a}}$, then $\chi \in \supp(\sqrt{f-\widetilde{a}})$. Moreover, if $G = C_2^n$ and $f \le m$ for some $m\in \mathbb{R}$, then $|c_{\chi}| > \frac{a - \widetilde{a}}{2} \left( \frac{1}{\sqrt{a - \widetilde{a}}} - \frac{1}{\sqrt{m - a} + \sqrt{m - \widetilde{a}}} \right)$ implies $\chi \in \supp(\sqrt{f - \widetilde{a}})$.
\end{proposition}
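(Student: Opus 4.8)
The plan is to control the Fourier coefficients of $\sqrt{f-\widetilde{a}}$ by those of $\sqrt{f-a}$. Put $\delta := a-\widetilde{a}>0$, $g:=f-a$, and $\phi := \sqrt{f-a}-\sqrt{f-\widetilde{a}}=\sqrt{g}-\sqrt{g+\delta}$. Since $a$ and $\widetilde{a}$ are lower bounds of $f$ we have $g\ge 0$ and $g+\delta=f-\widetilde{a}>0$, so all square roots make sense; moreover $\phi(x)=-\delta/\bigl(\sqrt{g(x)}+\sqrt{g(x)+\delta}\bigr)$, and $\sqrt{g(x)+\delta}\ge\sqrt{\delta}$ gives the pointwise bound $|\phi(x)|\le\sqrt{\delta}$. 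By linearity of the Fourier transform, the coefficient of $\chi$ in $\sqrt{f-\widetilde{a}}$ equals $c_\chi-\widehat{\phi}(\chi)$, so it suffices to bound $|\widehat{\phi}(\chi)|$: whenever $|c_\chi|$ exceeds this bound, $c_\chi-\widehat{\phi}(\chi)\neq 0$, i.e. $\chi\in\supp(\sqrt{f-\widetilde{a}})$.

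For the first assertion I would simply invoke the elementary inequality $|\widehat{h}(\chi)|\le\max_{g\in G}|h(g)|$ (the same estimate behind Proposition~\ref{thm:Interpolation_Error_Bound}), applied to $h=\phi$: this yields $|\widehat{\phi}(\chi)|\le\sqrt{\delta}=\sqrt{a-\widetilde{a}}$, hence $|c_\chi|>\sqrt{a-\widetilde{a}}$ forces $c_\chi-\widehat{\phi}(\chi)\neq 0$.

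For the refined bound on $C_2^n$ I would first turn the pointwise estimate into a two-sided one: $f\le m$ yields $g(x)\le m-a$ and $g(x)+\delta\le m-\widetilde{a}$, hence $\sqrt{g(x)}+\sqrt{g(x)+\delta}\le\sqrt{m-a}+\sqrt{m-\widetilde{a}}$, so $\phi(x)\in[-A,-B]$ with $A:=\sqrt{\delta}$ and $B:=\delta/(\sqrt{m-a}+\sqrt{m-\widetilde{a}})$; note $A\ge B$ because $m-\widetilde{a}\ge\delta$. The key structural input is that every nontrivial character $\chi=z^\alpha$ ($\alpha\neq 0$) of $C_2^n$ is balanced, i.e. $P:=\{x:\chi(x)=1\}$ and $N:=\{x:\chi(x)=-1\}$ each have $|G|/2$ elements, so that $\widehat{\phi}(\chi)=\frac{1}{|G|}\bigl(\sum_{x\in P}\phi(x)-\sum_{x\in N}\phi(x)\bigr)$. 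Since $\phi(x)\in[-A,-B]$ for every $x$, the first sum lies in $[-\tfrac{A|G|}{2},-\tfrac{B|G|}{2}]$ and the negative of the second in $[\tfrac{B|G|}{2},\tfrac{A|G|}{2}]$, whence $|\widehat{\phi}(\chi)|\le\tfrac{A-B}{2}=\tfrac{a-\widetilde{a}}{2}\bigl(\tfrac{1}{\sqrt{a-\widetilde{a}}}-\tfrac{1}{\sqrt{m-a}+\sqrt{m-\widetilde{a}}}\bigr)$, which is exactly the stated threshold. The trivial character is handled directly without any hypothesis: $\widehat{\sqrt{f-\widetilde{a}}}(\chi_0)=\frac{1}{|G|}\sum_x\sqrt{f(x)-\widetilde{a}}\ge\sqrt{\delta}>0$, so $\chi_0\in\supp(\sqrt{f-\widetilde{a}})$.

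I do not anticipate a genuine difficulty here; the argument is short once one spots that it is precisely the balancedness of nontrivial characters of $C_2^n$ that sharpens the generic bound $\sqrt{a-\widetilde{a}}$ to roughly one half of it. The only care needed is in the interval arithmetic of the balanced-character step, so that the constant comes out exactly as $\tfrac{a-\widetilde{a}}{2}\bigl(\tfrac{1}{\sqrt{a-\widetilde{a}}}-\tfrac{1}{\sqrt{m-a}+\sqrt{m-\widetilde{a}}}\bigr)$ rather than something merely comparable to it.
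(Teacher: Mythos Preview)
Your proposal is correct and follows essentially the same route as the paper: bound $|c_\chi-\widetilde{c_\chi}|=|\widehat{\phi}(\chi)|$ by the pointwise size of $\phi=\sqrt{f-a}-\sqrt{f-\widetilde{a}}$, and on $C_2^n$ exploit that a nontrivial character is balanced (the paper phrases this via an involution $\psi$ with $\chi\circ\psi=-\chi$, you via the partition $P\sqcup N$, which is the same thing). Your explicit treatment of the trivial character is a small but welcome addition, since the balancedness argument does not apply to $\chi_0$.
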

\begin{proof}
We recall that $c_{\chi} \coloneqq \frac{1}{|G|} \sum_{g \in G} \overline{\chi(g)} \sqrt{f(g) - a}$. Similarly, we may write $\sqrt{f - \widetilde{a}} = \sum_{\chi\in \widehat{G}} \widetilde{c_{\chi}} \chi$. For each $\chi \in \widehat{G}$, we notice that
\begin{align*}
|c_{\chi} - \widetilde{c_{\chi}}| &= \frac{1}{|G|} \Big\lvert \sum_{g\in G} \overline{\chi(g)} \left(
\sqrt{f(g) -a} - \sqrt{f(g) - \widetilde{a}} \right) \Big\rvert \\
&= \frac{a - \widetilde{a}}{|G|} \Big\lvert \sum_{g\in G} \frac{ \overline{\chi(g)}}{ \sqrt{f(g) -a} + \sqrt{f(g) - \widetilde{a}} } \Big\rvert \\
&\le \sqrt{a - \widetilde{a}}.
\end{align*}
Therefore, if $|c_{\chi}| > \sqrt{a - \widetilde{a}}$ then $\widetilde{c_{\chi}} \ne 0$.

If $G = C_2^n$, then $\chi\in \widehat{G} \simeq \mathbb{Z}_2^n$ is a multilinear monomial. Thus $\chi(g) = \pm 1$ for any $g\in C_2^n = \{-1,1\}^n$. Moreover, it is straightforward to verify that we can construct a bijective map $\psi: C_2^n \to C_2^n$ such that $\chi(g) = - \chi(\psi(g))$ and $\psi^2 = \Id$. Thus we have
\begin{align*}
|c_{\chi} - \widetilde{c_{\chi}}| &= \frac{a - \widetilde{a}}{|G|} \Big\lvert \sum_{g\in G} \frac{ \overline{\chi(g)}}{ \sqrt{f(g) -a} + \sqrt{f(g) - \widetilde{a}} } \Big\rvert \\
&\le \frac{a - \widetilde{a}}{|G|} \sum_{g\in G, \chi(g) = 1} \left( \frac{1}{\sqrt{a - \widetilde{a}}} - \frac{1}{\sqrt{m - a} + \sqrt{m - \widetilde{a}}} \right) \\
&= \frac{a - \widetilde{a}}{2} \left( \frac{1}{\sqrt{a - \widetilde{a}}} - \frac{1}{\sqrt{m - a} + \sqrt{m - \widetilde{a}}} \right).
\end{align*}
\end{proof}
\subsection{FSOS with error\label{sec:SOS_with_error}}
We discuss in this subsection a remarkable feature of our solution (cf. Section~\ref{sec:Select_basis}) to Problem~\ref{problem:lower bound}. That is, a solution to the SDP problem~\eqref{SOS:SDP} which violates conditions   \eqref{SOS:SDP:Cond-1} and \eqref{SOS:SDP:Cond-2} can still provide us a tight lower bound
of $f$. This is the content of the next Theorem.
\begin{theorem}[FSOS with error]\label{thm:SOS_with_error}
Let $G$ be a finite abelian group and let $S$ a subset of $\widehat{G}$. Given a function $f:G \mapsto \mathbb{R}$ and a Hermitian matrix $Q\in \mathbb{C}^{S \times S}$, we have
\[
\min_{g\in G} f(g)\geq -\|\widehat{e}\|_1+\lambda |S|,
\]
where $\lambda$ is the minimal eigenvalue of $Q$, $e=f-v_S^* Q v_S$ and $v_S =(\chi)_{\chi \in S}$ is the vector consisting of all characters in $S$.
\end{theorem}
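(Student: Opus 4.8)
The plan is to decouple the estimate into two independent pointwise bounds at each $g\in G$: one controlling the ``structured'' part $v_S^* Q v_S$ from below by $\lambda|S|$, and one controlling the ``error'' part $e$ from below by $-\|\widehat e\|_1$. Adding them and taking the minimum over $g$ will give the claim, since $f = e + v_S^* Q v_S$ by definition of $e$.

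First I would handle the structured part. Let $\lambda$ be the smallest eigenvalue of the Hermitian matrix $Q$, so that $Q - \lambda I_{|S|} \succeq 0$, and factor $Q - \lambda I_{|S|} = M^* M$ with $M\in\mathbb{C}^{r\times S}$. Setting $h_j = \sum_{\chi\in S} M_{j,\chi}\,\chi$ for $1\le j\le r$, we get the pointwise identity
\[
v_S^*(g)\,(Q-\lambda I)\,v_S(g) \;=\; \|M v_S(g)\|_2^2 \;=\; \sum_{j=1}^r |h_j(g)|^2 \;\ge\; 0 ,
\qquad g\in G.
\]
On the other hand, since every character of a finite abelian group takes values on the unit circle, $|\chi(g)| = 1$, so $v_S^*(g)\,(\lambda I)\,v_S(g) = \lambda \sum_{\chi\in S}|\chi(g)|^2 = \lambda|S|$. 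Combining the two displays yields $v_S^* Q v_S \;\ge\; \lambda|S|$ as functions on $G$, pointwise.

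Next I would bound the error term. Writing the Fourier expansion $e = \sum_{\chi\in\widehat G}\widehat e(\chi)\,\chi$ and using $|\chi(g)| = 1$ once more, the triangle inequality gives $|e(g)| \le \sum_{\chi\in\widehat G}|\widehat e(\chi)| = \|\widehat e\|_1$ for every $g$, hence $e(g)\ge -\|\widehat e\|_1$. Putting this together with the previous paragraph, for each $g\in G$,
\[
f(g) \;=\; e(g) + v_S^*(g)\,Q\,v_S(g) \;\ge\; -\|\widehat e\|_1 + \lambda|S| ,
\]
and taking the minimum over $g\in G$ finishes the proof.

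Honestly there is no serious obstacle here — the argument is short once one sees the right decomposition. The only point requiring a little care is the interpretation of $v_S^* Q v_S$ as a \emph{function} on $G$ (evaluate $v_S$ at $g$ first, then form the quadratic form) and the observation that the Gram-type factorization of $Q-\lambda I$ produces genuine functions $h_j$ on $G$; after that, the two pointwise inequalities are immediate from $|\chi(g)|=1$ and the triangle inequality. If anything, the ``content'' of the theorem is conceptual rather than technical: it says that an approximate, constraint-violating SDP solution $Q$ still certifies a lower bound, degraded only by the $\ell^1$ mass of the Fourier coefficients of the residual $e$ and by a negative shift when $Q$ fails to be positive semidefinite.
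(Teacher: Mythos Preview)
Your proof is correct and follows essentially the same approach as the paper: both decompose $f = e + v_S^* Q v_S$, bound the quadratic part below by $\lambda|S|$ using $Q\succeq \lambda I$ together with $|\chi(g)|=1$, and bound $|e(g)|\le \|\widehat e\|_1$ by the triangle inequality. Your explicit factorization $Q-\lambda I = M^*M$ is a slightly more verbose way to state the Rayleigh-quotient inequality the paper uses directly, but the argument is the same in substance.
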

\begin{proof}
For any $g\in G$, we have
\[f(g)-e(g)=v_S(g)^*Qv(g)\geq \lambda v_S(g)^*v(g).\]
We observe that $|\chi(g)|=1$ for each $g\in G$ and $\chi \in \widehat{G}$. Thus $v_S(g)^*v_S(g) \leq |S|$ and $|e(g)| \leq \|\widehat{e}\|_1$ for any $g\in G$. This implies
\[
f(g)\geq -\|\widehat{e}\|_1+\lambda |S|,\quad g\in G.
\]
\end{proof}
According to Theorem~\ref{thm:SOS_with_error}, $-\|\widehat{e}\|_1+\lambda  |S|$ is a lower bound of $f$  even if $Q\nsucceq 0$ or  $e \neq 0$. The example that follows indicates that our method can give a tight lower bound of $f$ even if it does not admit an FSOS supported on $S$.
\begin{example}\label{ex4.4}
Let $f:C_2^3\to \mathbb{R}$ be the function defined by
\[
f(z_1,z_2,z_3)=4+z_1+z_2+z_3+z_1z_2z_3.
\]
We can check that the SDP~\eqref{SOS:SDP} has no feasible solution for $S=\{1,z_1,z_2,z_3\}$, i.e., $f$ has no FSOS supported on $S$. However, if we let $e(z_1,z_2,z_3) \coloneqq z_1z_2z_3$, then
\[
f-e=1+ \frac{1}{2} \sum_{i=1}^3 (1+z_i)^2,
\]
from which we obtain $f\geq 1 -\|\widehat{e}\|_1 =0$.
\end{example}
Below we record an algebraic version of Theorem~\ref{thm:SOS_with_error}, which is the form we need in Section~\ref{sec:Numerical}.
\begin{corollary}\label{coro:SOS_with_error}
Let $f,S,Q,e,\lambda$ be defined as above. Then $Q-\lambda \operatorname{Id} \succeq 0$ is  a Gram matrix of the function $f-e-\lambda |S|$.
\end{corollary}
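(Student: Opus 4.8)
The plan is to check directly that $Q-\lambda\operatorname{Id}$ meets the two requirements in the definition of a Gram matrix of $f-e-\lambda|S|$: positive semidefiniteness, and the linear conditions \eqref{eq:gram} with $\widehat{G}$ replaced by the index set $S$. Both become immediate once the definitions from Theorem~\ref{thm:SOS_with_error} are unwound.

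First I would record that $Q-\lambda\operatorname{Id}\succeq 0$. This is just the spectral theorem: $Q$ is Hermitian, so $Q=\sum_j \mu_j u_j u_j^*$ with real eigenvalues $\mu_j\ge\lambda$, whence $Q-\lambda\operatorname{Id}=\sum_j(\mu_j-\lambda)u_j u_j^*\succeq 0$. Thus $Q-\lambda\operatorname{Id}$ is a legitimate candidate Gram matrix.

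Next I would identify the function for which $Q-\lambda\operatorname{Id}$ is a Gram matrix. Here I use the translation between \eqref{eq:gram} and functions on $G$: a positive semidefinite $P\in\mathbb{C}^{S\times S}$ is a Gram matrix of the function $v_S^*Pv_S=\sum_{\chi,\chi'\in S}\overline{\chi}\,P_{\chi,\chi'}\,\chi'$, since grouping the terms of this sum by the character $\overline{\chi}\chi'$ and reading off coefficients is exactly a reformulation of \eqref{eq:gram}. Applying this with $P=Q-\lambda\operatorname{Id}$ gives
\[
v_S^*(Q-\lambda\operatorname{Id})\,v_S \;=\; v_S^*Qv_S-\lambda\,v_S^*v_S.
\]
By the definition $e=f-v_S^*Qv_S$ we have $v_S^*Qv_S=f-e$; and since $|\chi(g)|=1$ for every $g\in G$ and $\chi\in\widehat{G}$, the product $\overline{\chi}\chi$ is the constant function $1$, so $v_S^*v_S=\sum_{\chi\in S}\overline{\chi}\chi=|S|$. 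Hence $v_S^*(Q-\lambda\operatorname{Id})v_S=f-e-\lambda|S|$, and combined with $Q-\lambda\operatorname{Id}\succeq 0$ this is precisely the assertion.

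There is no genuinely hard step here: the corollary is a purely algebraic restatement of Theorem~\ref{thm:SOS_with_error}, and everything reduces to the two one-line computations above. The only point that merits care is making explicit the dictionary between the paper's coefficient-wise definition of a Gram matrix (via \eqref{eq:gram} together with $Q\succeq 0$) and the compact functional identity $h=v_S^*Pv_S$; with that in hand the proof is immediate. As a byproduct one sees that $f-e-\lambda|S|=v_S^*(Q-\lambda\operatorname{Id})v_S\ge 0$, consistent with the FSOS setup of Subsection~\ref{subsec:FSOS}, and that Theorem~\ref{thm:SOS_with_error} itself can be recovered from this by the further estimate $f\ge e+\lambda|S|\ge -\|\widehat{e}\|_1+\lambda|S|$.
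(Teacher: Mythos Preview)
Your proof is correct and is exactly the intended argument: the paper states the corollary without proof as an ``algebraic version'' of Theorem~\ref{thm:SOS_with_error}, and your two computations ($Q-\lambda\operatorname{Id}\succeq 0$ by the spectral theorem, and $v_S^*(Q-\lambda\operatorname{Id})v_S=f-e-\lambda|S|$ using $e=f-v_S^*Qv_S$ and $v_S^*v_S=|S|$) are precisely the unwinding of that theorem's proof. The only place you go slightly beyond the paper is in noting that $v_S^*v_S=|S|$ exactly (the paper's proof of Theorem~\ref{thm:SOS_with_error} writes $\le$), which is indeed needed here.
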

We observe that  $f$ may be represented by the polynomial function $F_{Q - \lambda \Id} + F_{\lambda \Id}$, where $F_M(z) = z^\tp M z$ is the non-negative polynomial determined by a symmetric matrix $M$. Since $Q - \lambda \Id \succeq 0$, we obtain that the minimum of $F_{\lambda \Id}$ supplies a lower bound of $f$. Unfortunately,
when $z\in \mathbb{R}^n$,
the minimum of $F_{\lambda \Id}$ is $-\infty$ if $\lambda < 0$. This phenomenon distinguishes our method from the traditional method of polynomial optimization.

For instance, we consider $f = 2z_1z_2$ on $C_2^2 $ so that $Q$ is
\[\kbordermatrix{\mbox{}&z_1&z_2\\
z_1&0& 1\\
z_2&1 & 0}\]
and the minimal eigenvalue of $Q$ is $-1$. According to Corollary~\ref{coro:SOS_with_error}, we conclude that $Q + \Id\succeq 0$ is the Gram matrix of $f + 2$ and $f\ge -2$ on $C_2^2$. As a comparison, $f$ is represented by the polynomial
\[
F_{Q + \Id} + F_{-\Id} = (z_1+z_2)^2-(z_1^2+z_2^2)
\]
and the minimum of $F_{-\Id} = -(z_1^2+z_2^2)$ is $-\infty$, which fails to provide a non-trivial lower bound of $f$.

We conclude this section by briefly summarizing the main advantages of FSOS with error. Interested readers are referred to Appendix~\ref{append:experiments} for numerical examples.
\begin{enumerate}
  \item \emph{Early termination}: existing methods \cite{VVH2008,wang2019low} need to wait the algorithm converges to  a solution satisfying conditions  \eqref{SOS:SDP:Cond-1} and \eqref{SOS:SDP:Cond-2}. According to Theorem \ref{thm:SOS_with_error}, our method can find a lower  bound even these conditions are not satisfied. This feature enables us to set different time limits on solving SDP problems. Clearly the longer time limit we set, the better lower bound we obtain.

  \item \emph{Adaptivity to more SDP solvers}: since the condition \eqref{SOS:SDP:Cond-2} is not required to be satisfied exactly, we can use the more efficient SDP solver SDPNALplus \cite{sun2020sdpnal+} to solve the problem in Lemma  \ref{lem:sdp formulation-1}. As a consequence, MAX-kSAT problems of much larger sizes from the benchmark set\footnote{\href{http://www.maxsat.udl.cat/09/index.php} {http://www.maxsat.udl.cat/09/index.php}}
     \footnote{ \href{http://www.maxsat.udl.cat/16/index.html}{http://www.maxsat.udl.cat/16/index.html}} can be solved successfully, with which SOS based algorithms proposed in \cite{VVH2008} fail to deal.
%
%

  \item \emph{Size reduction}: given a monomial basis $S$, algorithms in \cite{VVH2008} can not get a lower bound if $\operatorname{supp}(f) \not\subseteq  \{\chi \chi':\chi,\chi' \in S\}$, since $f$ has no FSOS supported on $S$. However, according to Theorem \ref{thm:SOS_with_error}, a feasible solution of \eqref{SOS:SDP} that fails to satisfy \eqref{SOS:SDP:Cond-1} can still provide us a lower bound. This leads to a reduction on sizes of our SDP problems.
\end{enumerate}
\section{Computation of lower bounds}\label{sec:Numerical}
This section is concerned with turning discussions in Section~\ref{sec:Select_basis} into an algorithm for computing a lower bound of a function $f$ on a finite abelian group $G$. To begin with, we recall that every function $h$ on $G$ admits the Fourier expansion:
\[
h = \sum_{\chi \in \widehat{G}} \widehat{h}(\chi) \chi.
\]
For simplicity, we denote $h_0 \coloneqq \widehat{h}(1)$, where $1$ is the identity element in $\widehat{G}$.
\subsection{our algorithm}
Let $S$ be a subset of $\widehat{G}$ and let $Q\in \mathbb{C}^{S \times S}$ be a Hermitian matrix. We define $e \coloneqq f-f_0-v_S^* Q v_S$ where $v_S$ is the $|S|$-dimensional column vector consisting of characters in $S$. Applying Theorem~\ref{thm:SOS_with_error} to $f-f_0$, we obtain
\[
f(g) \geq f_0 +e_0-\|\widehat{e-e_0}\|_1 + \lambda_{\min}(Q) |S|,\quad g\in G
\]
where $\lambda_{\min}(Q)$ is the smallest eigenvalue of $Q$. We also observe that
\[
e_0 =-\sum_{\chi=\chi'}Q(\chi,\chi')=-\operatorname{trace}(Q).
\]
Therefore, we can find a lower bound of $f$ by solving the following unconstrained convex optimization problem:
\begin{equation}\label{Eq:opti:function}
  \min _{Q \in \mathbb{C}^{S\times S},~Q=Q^*} F (Q),
\end{equation}
where
 \[ F(Q)=\operatorname{trace}(Q)+\|E(Q)\|_1-\lambda_{\text{min}}(Q)  |S|\]
 and
$E:\mathbb{C}^{S\times S} \mapsto \mathbb{C}^{|G|}$ is the affine map which maps $Q$ to the (sparse) vector consisting of Fourier coefficients of $e-e_0$. We notice that the subgradient of $F$ is
\[
\partial F=\operatorname{Id}+ \operatorname{sign}(E(Q)) \partial E -|S| uu^*,
\]
where $\partial E$ is  the gradient of $E$,   $\operatorname{sign}(x)$ is the sign function and $u$ is the unit eigenvector of $Q$ corresponding to $\lambda_{\min}(Q)$. Thus we obtain Algorithm~\ref{alg:Fast_Lower_Bounds} for a lower bound of $f$.
\begin{algorithm}[!htbp]
\caption{Lower Bounds of Functions on Finite Abelian Groups.
}
\label{alg:Fast_Lower_Bounds}
\begin{algorithmic}[1]
\renewcommand{\algorithmicrequire}{\textbf{Input}}
\Require
 a function $f$ on $G$, positive integers $l$, $m$, $d$ and $k$.
\renewcommand{\algorithmicensure}{\textbf{Output}}
\Ensure
a lower bound of $f$.
\State Select $S$ with $l$, $m$, $d$ and $k$. \Comment{by Algorithm \ref{alg:selection}} \label{alg:Fast_Lower_Bounds:step1}
\State  Solve \eqref{SOS:SDP} for $Q_0$. \Comment{by SDPNAL+}  \label{alg:Fast_Lower_Bounds:step2}
\State Solve \eqref{Eq:opti:function} for $Q$. \Comment{by gradient descent with initial point $Q_0$} \label{alg:Fast_Lower_Bounds:step3} \\
\Return $f_0-F(Q)$.
\end{algorithmic}
\end{algorithm}
\subsection{rounding}\label{sec:rounding}
Rounding is an important step in {SDP-based} algorithms for combinatorial optimization problems, especially when both the optimum value and optimum point are concerned. There exist several rounding techniques in the literature. Examples include rounding by random hyperplanes \cite{goemans1995improved} together with its improved version \cite{377033}, the skewed rounding procedure \cite{LLZ02} and the randomized rounding technique \cite{VVH2008}. Among all these rounding strategies, the one proposed in \cite{VVH2008} can be easily adapted to our situation.

We present two rounding methods, one is  based on the null space of the Gram matrix, and the other is based on the lower rank approximation of the  moment matrix.

\begin{enumerate}

\item Let $Q  \in \mathbb{R}^{S \times S}$ be a solution to
\eqref{SOS:SDP}. For a given $f: G\to \mathbb{R}$ and $S\subseteq \widehat{G}$ containing all characters of degree at most one, we assume
\[
f-\alpha=v_S^* Q  v_S,
\]
where $\alpha$ is the minimum value of $f$, $v_S$ is the $|S|$-dimensional column vector consisting of characters in $S$ and $Q \succeq 0$. 


If the null space of $Q$ is one-dimensional, then by normalizing the first element of any null vector of $Q$ to be one, we obtain the desired solution. Once we obtain a desired null vector $v\in \mathbb{R}^{|S|}$, we can  recover $\tilde{g}\in G$ by rounding elements of $v$.

If the null space of $Q$ has dimension $d > 1$, then the normalization is no longer sufficient since elements of a null vector of $Q$ might be algebraically inconsistent in this case. We can extract the solution  based on  the Stickelberger theorem,  which is often used to solve polynomial systems in the literature \cite{Stetter:1995,Corless1995,Corless1997,ReidZhi08,Henrion2005}.

\item The moment matrix  $H \succeq 0$   is  the solution of the dual problem of (\ref{SOS:SDP}).
  Let $H=\sum_{i=1}^{|S|} \mu_i u_iu_i^*$ be the eigenvalue decomposition of $H$, with $\mu_1 \geq \mu_2 \geq \mu_3,...\geq \mu_{|S|}$. Then $\widetilde{H}=\mu_1 u_1 u_1^* \in \mathbb{R}^{S \times S}$ is  the rank-one approximation  of $H$. We can recover the solution $\widetilde{x}$ from $\widetilde{H}$ by setting  $\widetilde{x}_i=\widetilde{H}_{1,x_i}$.

\end{enumerate}

 In practice, the matrix $Q$ obtained in Algorithm~\ref{alg:Fast_Lower_Bounds} may not be positive semidefinite. We need to update it by  $Q + \lambda_{\min}(Q) \Id$. It is a little surprise for us to notice that
 the numerical corank of   $Q + \lambda_{\min}(Q) \Id$ is   $1$ very often,  which makes it possible to recover the optimal solution efficiently from its null vector.

To conclude this section, we illustrate the above rounding procedure by examples.
\begin{example}[one-dimensional null space]
In Example~\ref{ex4.4} we see that
\[
f(z_1,z_2,z_3)=4+z_1+z_2+z_3+z_1z_2z_3
\]
is a non-negative function on $C_2^3$. For
\[
S=\{1,z_1,z_2,z_3,z_1z_2z_3\},
\]
we may obtain a Hermitian (but not positive semidefinite) matrix by SDPNAL+:
\[Q=\kbordermatrix{\mbox{}&1&z_1&z_2&z_3&z_1z_2z_3\\
1& 1.9546 & 0.5000 & 0.5000 & 0.5000 & 0.5000\\
z_1& 0.5000 & 0.4536 & 0.0000 & 0.0000 & 0.0000 \\
z_2& 0.5000 & 0.0000 & 0.4536 & 0.0000 & 0.0000 \\
z_3& 0.5000 &   0.0000  &  0.0000   & 0.4536 &   0.0000\\
z_1z_2z_3&0.5000 & 0.0000 & 0.0000 & 0.0000 & 0.4536},
\]
whose eigenvalues are
\[-0.0462,0.4536, 0.4536, 0.4536,  2.4544.\]
 The normalized eigenvector corresponding to $-0.0462$ is
\[
v = \kbordermatrix{\mbox{}&1&z_1&z_2&z_3&z_1z_2z_3\\
\mbox{}&1 &-1.000447&-1.000447& -1.000447&-1.000447}.
\]
We recover $z_1=z_2=z_3=-1$ by rounding the elements of $v$, which is the optimal solution of  $f=0$.
\end{example}

\begin{example}[higher dimensional null space]
We consider the function $f(z_1,z_2) = (1+z_1+z_2+z_1z_2)^2$ on $C_2^2$ and $S = \{1,z_1,z_2,z_1z_2\}$. We notice that $f$ has a Gram matrix
\[
Q = \begin{bmatrix}
1 & 1 & 1 & 1 \\
1 & 1 & 1 & 1 \\
1 & 1 & 1 & 1 \\
1 & 1 & 1 & 1 \\
\end{bmatrix},
\]
The null space of $Q$ is spanned by
\[
\begin{bmatrix}
1 & 0 & 0 & -1
\end{bmatrix}^\tp,\quad \begin{bmatrix}
1 & 0 & -1 & 0
\end{bmatrix}^\tp, \quad \begin{bmatrix}
1 & -1 & 0 & 0
\end{bmatrix}^\tp.
\]
Using these null vectors, we form the multiplication matrix\footnote{Any random linear combination of $z_1$ and $z_2$ works as well.} of $z_1 - z_2/2$:
\[
\begin{bmatrix}
0 & 1 & -1/2 \\
3/2 & 1/2 & 1/2 \\
-3/2 & -1 & -1
\end{bmatrix}
\]
whose eigenvectors are
\[
\begin{bmatrix}
-1 & -1 & 1
\end{bmatrix}^\tp, \quad \begin{bmatrix}
-1 & 1 & 1
\end{bmatrix}^\tp, \quad \begin{bmatrix}
1 & -1 & 1
\end{bmatrix}^\tp.
\]
By normalizing the first element of these eigenvectors to be $1$, we recover  three  solutions $(1,-1), (-1,-1)$ and $(-1,1)$ of $f (z_1,z_2) = 0$.
\end{example}
%

\appendix
\section{Numerical experiments}\label{append:experiments}
In this section, we perform three numerical experiments to test Algorithm~\ref{alg:Fast_Lower_Bounds} and the rounding {techniques} discussed in Section~\ref{sec:rounding}. We implement our algorithm in Matlab (2016b) with SDPNAL+ \cite{sun2020sdpnal+}. The code is available on \href{http://github.com/jty-AMSS/Fast-Lower-Bound-On-FAG}{github.com/jty-AMSS/Fast-Lower-Bound-On-FAG}.
 All experiments are performed on a desktop computer with Intel Core i9-10900X@3.70GHz CPU and 128GB RAM memory.
\subsection{lower bounds of random functions}\label{subsec:random functions}
The goal of the first experiment is to exhibit the correctness and efficiency of Algorithm~\ref{alg:Fast_Lower_Bounds}. We randomly generate non-negative integer-valued functions on two finite abelian groups and compute their lower bounds by Algorithm~\ref{alg:Fast_Lower_Bounds}, TSSOS\cite{WML21} and CS-TSSOS \cite{WMLM22} respectively. Without loss of generality, we only consider functions whose minimum values are zero.

For comparison purposes, we record the running time and the result for each algorithm in Tables~\ref{Tab:random-example-1} and \ref{Tab:random-example-2}. This experiment indicates that Algorithm~\ref{alg:Fast_Lower_Bounds} is more efficient than the general-purpose methods TSSOS and CS-TSSOS on random examples.
\subsubsection{experiment on $C_2^{25}$}
We generate ten polynomials of degree three and sparsity at most $500$ on $C_2^{25}$ by randomly picking its non-constant coefficients from the set $\{m\in \mathbb{Z}: -5 \le m \le 5\}$. The constant terms of these polynomials  are chosen so that their minimum values are zero.

For each of these ten polynomial $f$, we perform Algorithm~\ref{alg:Fast_Lower_Bounds} with the following parameters:
\begin{align*}
d&=2, \quad k=3\left|\operatorname{supp}(f)\right|,\quad  l = 0, \\
m &= \text{sum of absolute value of coefficients of $f$}.
\end{align*}
We apply TSSOS and CS-TSSOS with Mosek \cite{mosek} of relaxation order $2$ to compute lower bounds of these functions. Results are shown in Table \ref{Tab:random-example-1}, where``sp" means the sparsity and ``bound" means the lower bound obtained by the corresponding algorithm.
\begin{table}[htbp]
\begin{center}
\begin{tabular}{|c|c|c|c|c|c|c|c|}
  \hline
 \multirow{2}{*}{No}&  \multirow{2}{*}{sp}& \multicolumn{2}{|c|}{Algorithm~\ref{alg:Fast_Lower_Bounds}} & \multicolumn{2}{|c|}{TSSOS} &\multicolumn{2}{|c|}{CS-TSSOS}\\ \cline{3-8}
   && bound & time & bound & time& bound & time\\ \hline
1&451&-6.6e-01&903&  \underline{-2.75} &\textbf{407}&9.0e-09&821 \\ \hline
2&440&-1.1e-01&\textbf{587}& \underline{-3.95}&1208& \underline{-3.44} &1302 \\ \hline
3&453&-4.9e-02&\textbf{519}&6.3e-08&1104&4.5e-09&1634 \\ \hline
4&453&-1.6e-06&1464& \underline{-2.5} &\textbf{1091}& \underline{-1.92} &1510 \\ \hline
5&451&-4.4e-01&804& \underline{-4.06} &\textbf{636}&2.2e-06&1535 \\ \hline
6&457&-6.7e-01&\textbf{581}&9.0e-11&1081&2.3e-10&2031 \\ \hline
7&452&-2.2e-02&\textbf{636}&4.0e-10&847&7.0e-09&1895 \\ \hline
8&455&-6.5e-03&\textbf{775}&4.8e-09&797&7.0e-11&1126 \\ \hline
9&443&-1.3e-01&\textbf{554}&4.9e-10&716&2.4e-11&1284 \\ \hline
10&454&-1.9e-02&\textbf{561}&2.0e-08&807&3.0e-09&1352 \\ \hline
\end{tabular}
\end{center}
\caption{Random examples on $C_2^{25}$} \label{Tab:random-example-1}
\end{table}

We notice that in Table~\ref{Tab:random-example-1}, Algorithm~\ref{alg:Fast_Lower_Bounds} successfully recovers the minimum value zero for all these ten functions since our functions are integer-valued. As a comparison, TSSOS and CS-TSSOS fail to recover the minimum value on four and two instances, respectively. Moreover, Algorithm~\ref{alg:Fast_Lower_Bounds} is  faster than TSSOS (resp. CS-TSSOS) on seven (resp. nine) out of ten functions.
\subsubsection{experiment on $C_3^{15}$}
We generate ten functions on $C_3^{15}$ by the following procedure:
\begin{enumerate}
  \item Set  $f=0$; \label{Generate_random_function:step-1}
  \item Randomly generate an integer-valued function $h$ on $C_3^{2}$, such that $0\leq h \leq 10$; \label{Generate_random_function:step-2}
  \item Randomly pick a  projection map $\tau: C_3^{15} \to C_3^{2} $; \label{Generate_random_function:step-3}
  \item Update $f\leftarrow f+h\circ \tau$; \label{Generate_random_function:step-4}
  \item Repeat steps \eqref{Generate_random_function:step-2}-\eqref{Generate_random_function:step-4} until the sparsity of $f$ is greater than $190$. 
 \item Update $f\leftarrow f- \min_{g \in C_3^{15}} f(g) $.
\end{enumerate}
Functions obtained by the above procedure are integer-valued and their minimum values are zero. Although Algorithm~\ref{alg:Fast_Lower_Bounds} can handle functions on $C_3^{15}$ of sparsity up to $350$, we set the sparsity limit to be $190$ because CS-TSSOS can only deal with functions of sparsity around $200$.

We compute their lower bounds by Algorithm~\ref{alg:Fast_Lower_Bounds} with
\begin{align*}
d&=2,\quad k=3\left|\operatorname{supp}(f)\right|,\quad l=0 \\
m &= \text{sum of maximum value of $h$ in step \eqref{Generate_random_function:step-2}}.
\end{align*}
We also apply CS-TSSOS with Mosek of relaxation order\footnote{CS-TSSOS fails to complete the computation if we use smaller relaxation order.} $4$ to these functions.

We do not test TSSOS in this experiment since it is only applicable to functions with real variables and {real coefficients}, and $C_3 = \{1,\omega,\omega^2\}$ where $\omega = \exp(2\pi i/3) \in \mathbb{C}$. \if CS-TSSOS can only compute the first step of the  hierarchy, and returned "out of memory" when we try to compute higher steps. \fi
Results are set out in Table \ref{Tab:random-example-2}, where ``sp" denotes the sparsity of each function and ``bound" means the lower bound computed by the corresponding algorithm.
\begin{table}[htbp]
\begin{tabular}{|c|c|c|c|c|c|c|}
  \hline
 \multirow{2}{*}{No}&  \multirow{2}{*}{sp}& \multicolumn{2}{|c|}{Algorithm~\ref{alg:Fast_Lower_Bounds}} & \multicolumn{2}{|c|}{CS-TSSOS} \\ \cline{3-6}
   & &bound & time &bound & time \\ \hline
1&191&-1.20e-01& \textbf{237.0}&-1.98e-05&3148.1 \\ \hline
2&191&-3.42e-12&\textbf{224.8}&2.38e-08&1923.3 \\ \hline
3&191&-3.53e-03&\textbf{223.6}&2.76e-09&444.5 \\ \hline
4&191&-1.74e-01&\textbf{887.1}&-8.55e-06&6623.8 \\ \hline
5&191&-3.31e-12&\textbf{876.6}&1.17e-07&1745.5 \\ \hline
6&191&-2.40e-12& \textbf{221.7}&5.34e-07&752.4 \\ \hline
7&191&-3.82e-12&\textbf{225.3}&3.92e-08&362.9 \\ \hline
8&191&-6.92e-02&\textbf{402.5}&8.33e-09&2144.8 \\ \hline
9&191&-1.53e-01&\textbf{227.0}&4.21e-07&1695.6 \\ \hline
10&191&-5.71e-12&\textbf{404.3}&4.61e-10&1105.3 \\ \hline
\end{tabular}
\caption{Random examples on $C_3^{15}$} \label{Tab:random-example-2}
\end{table}

As illustrated in Table~\ref{Tab:random-example-2}, both Algorithm~\ref{alg:Fast_Lower_Bounds} and CS-TSSOS successfully recover the minimum value for all instances, but Algorithm~\ref{alg:Fast_Lower_Bounds} is much faster than CS-TSSOS.

We notice that all the ten functions in Table~\ref{Tab:random-example-2} have the same sparsity $191$. After a little thought, one can realize that this is a result of our procedure to generate these functions. Indeed, the function $h\circ \tau$ in the procedure contains $8$ monomials $1,z_i,z_j,z_i^2, z_j^2, z_iz_j, z_i^2z_j, z_iz_j^2$ for some $1\le i < j \le 15$. Once $f$ involves all $15$ variables in some iteration, it must contain all $31$ monomials $z_j^k,1\le j \le 15, 0\le k \le 2$. Thus each update $f + h\circ \tau$ only increases the sparsity by $0$ or $4$ and the sparsity of the resulting function is of the form $31 + 4k, k\in \mathbb{N}$. Since we terminate the procedure if the sparsity exceeds $190$, the sparsity is expected to be $191 = 31 + 4\times 40$.
\subsection{upper bounds of MAX-SAT problems}\label{subsec:MAXSAT}
As an application, we test Algorithm~\ref{alg:Fast_Lower_Bounds} on MAX-SAT problems. We first recall from Proposition~\ref{prop:arithmetization}-\eqref{prop:arithmetization:4} that for a given CNF formula $\phi$ in $n$ variables with $m$ clauses, the maximum number of simultaneously satisfiable clauses in $\phi$ is equal to $m - \min_{g\in C_2^n} f_{\phi}(g)$. Thus solving the MAX-SAT probelm for $\phi$ is equivalent to computing the minimum value of its characteristic function $f_{\phi}$ on $C_2^n$.

Our testing MAX-SAT problems are drawn from the benchmark problem set in 2016 and 2009 MAX-SAT competitions\footnote{\href{http://www.maxsat.udl.cat/09/index.php}{http://www.maxsat.udl.cat/09/index.php}}
     \footnote{\href{http://www.maxsat.udl.cat/16/index.html}{http://www.maxsat.udl.cat/16/index.html}}. When applicable, we also apply TSSOS and CS-TSSOS to solve these problems. Results are summarized in Tables~\ref{Tab:Max-2SAT} and \ref{Tab:Max-3SAT}. From the former, we may conclude that Algorithm~\ref{alg:Fast_Lower_Bounds} is more effective than TSSOS and CS-TSSOS on MAX-SAT problems, while from the latter, we may see the flexibility of Algorithm~\ref{alg:Fast_Lower_Bounds}.
\subsubsection{experiment on unweighted MAX-2SAT problems}\label{subsubsec:MAX-2SAT}
In this experiment, we consider {randomly  generated} unweighted MAX-2SAT problems in 2016 MAX-SAT competition. Such a problem has $120$ variables, in which the number of clauses ranges from $1200$ to {$2600$}. We apply Algorithm~\ref{alg:Fast_Lower_Bounds} to compute lower bounds of the corresponding characteristic functions on {$C_2^{120}$}, with the following parameters:
\begin{align*}
d&=1, \quad k=\left|\operatorname{supp}(f)\right|,\quad l=0, \\
m&= \text{number of clauses}.
\end{align*}

We apply TSSOS and CS-TSSOS with Mosek of relaxation order $1$ to these functions. It is worthwhile to point out that any higher order relaxations would result in a memory leak. This is because the size of a Gram matrix in a higher order relaxations increases exponentially.\footnote{There are $\sum_{j=0}^k \binom{120}{k}$ monomials involved in the $k$-th relaxation.} For example, the size of a Gram matrix in the first relaxation is $121 \times 121$ while it is $7261 \times 7261$ in the second relaxation.

Numerical results are reported in Table~\ref{Tab:Max-2SAT}, in which ``clause" denotes the number of clauses in each CNF formula, ``min" is the minimum of the characteristic function and ``bound" means the lower bound of the characteristic function obtained by each method.
\begin{table}[htbp]
\begin{center}
\begin{tabular}{|c|c|c|c|c|c|c|c|c|}
  \hline
 \multirow{2}{*}{\small No}&  \multirow{2}{*}{\small  clause}&  \multirow{2}{*}{\small min}& \multicolumn{2}{|c|}{Algorithm~\ref{alg:Fast_Lower_Bounds}} & \multicolumn{2}{|c|}{TSSOS} &\multicolumn{2}{|c|}{CS-TSSOS}\\ \cline{4-9}
 && &\small bound & \small  time &\small  bound & \small  time&\small  bound &\small   time\\ \hline
 1&1200&161& \textbf{159.5} &370&146.7&45&146.7&52 \\ \hline
2&1200& 159& \textbf{156.7} &327&143.1&49&143.1&55 \\ \hline
3&1200&160& \textbf{159.0}&362&146.8&46&146.8&64 \\ \hline
4&1300&180& \textbf{177.5}&450&162.4&52&162.4&73 \\ \hline
5&1300&172& \textbf{170.6} &417& 156.2&47&156.2&65 \\ \hline
6&1300&173& \textbf{171.6}&432&158.8&44&158.8&58 \\ \hline
7&1400&197& \textbf{194.8} &506&179.8&46&179.8&75 \\ \hline
8&1400&191& \textbf{189.3} &499&174.3&51&174.3&87 \\ \hline
9&1400&189& \textbf{187.2} &504&172.1&58&172.1&78 \\ \hline
10&1500&211& \textbf{209.9} &589&194.5&53&194.5&93 \\ \hline
11&1500&213& \textbf{210.1} &573&194.4&50&194.4&88 \\ \hline
12&1500&207& \textbf{205.7} &531&191.3&50&191.3&87 \\ \hline
13&1600&233& \textbf{231.2} &668&215.6&50&215.6&90 \\ \hline
14&1600&239& \textbf{235.0} &668&218.7&48&218.7&85 \\ \hline
15&1600&233& \textbf{230.5} &617&215.2&52&215.2&68 \\ \hline
16&1700&257& \textbf{255.1} &745&238.3&48&238.3&91 \\ \hline
17&1700&248& \textbf{245.7} &749&229.2&53&229.2&105 \\ \hline
18&1700&239& \textbf{238.9}&738&225.3&50&225.3&86 \\ \hline
19&1800&291& \textbf{285.8} &788&268.4&56&268.4&87 \\ \hline
20&1800&262& \textbf{261.3} &896&244.2&53&244.2&99 \\ \hline
21&1800&279& \textbf{277.5} &836& 259.8&50&259.8&99 \\ \hline
22&1900&293& \textbf{292.1} &1002&275.3&67&275.3&119 \\ \hline
23&1900&296& \textbf{294.3} &987&275.8&53&275.8&132 \\ \hline
24&1900&294& \textbf{291.8} &994&273.0&53&273.0&113 \\ \hline
25&2000&307& \textbf{306.4} &1103&288.8&57&288.8&149 \\ \hline
26&2000&321& \textbf{318.1}&1131&299.8&55&299.8&140 \\ \hline
27&2000&307& \textbf{306.0}&1240&288.3&58&288.3&139 \\ \hline
28&2100&336& \textbf{335.5} &1202&317.7&68&317.7&141 \\ \hline
29&2100&336& \textbf{332.2} &1215&313.4&57&313.4&141 \\ \hline
30&2100&332& \textbf{330.3} &1224&311.3&57&311.3&104 \\ \hline
31&2200&358& \textbf{355.6} &1467&334.6&54&334.6&163 \\ \hline
32&2200&371& \textbf{365.8} &1351&345.2&52&345.2&114 \\ \hline
33&2200&359& \textbf{358.1} &1381&338.3&59&338.3&130 \\ \hline
34&2300&380& \textbf{377.6}&1412&356.8&58&356.8&127 \\ \hline
35&2300&383& \textbf{381.5} &1535&359.2&61&359.2&136 \\ \hline
36&2300&365&\textbf{364.7}&1510&345.4&53&345.4&146 \\ \hline
37&2400&389& \textbf{387.9} &1686&368.7&56&368.7&151 \\ \hline
38&2400&402& \textbf{400.0} &1757&378.1&56&378.1&125 \\ \hline
39&2400&380&\textbf{380.0}&1951&363.0&69&363.0&153 \\ \hline
40&2500&418&\textbf{416.4}&1963&395.5&60&395.5&133\\ \hline
41&2500&435&\textbf{432.4}&1876&411.5&55&411.5&143\\ \hline
42&2500&425&\textbf{424.2}&1934&402.4&68&402.4&186\\ \hline
43&2600&439&\textbf{436.7}&2004&414.0&57&414.0&148\\ \hline
44&2600&458&\textbf{455.8}&2010&434.2&57&434.2&221\\ \hline
45&2600&440&\textbf{439.1}&1983&418.7&59&418.7&154\\ \hline
\end{tabular}
\end{center}
\caption{Unweighted MAX-2SAT problems} \label{Tab:Max-2SAT}
\end{table}

From Table~\ref{Tab:Max-2SAT}, we see that lower bounds obtained by Algorithm~\ref{alg:Fast_Lower_Bounds} are very close to minimum values of characteristic functions, which are  better than those obtained by TSSOS and CS-TSSOS. In theory, one can expect an improvement in the quality of lower bounds by increasing the order of relaxations in TSSOS and CS-TSSOS. Unfortunately, as we point out, higher order relaxations are not possible due to the memory leak caused by huge sizes of MAX-2SAT problems we considered.
\subsubsection{experiment on weighted MAX-3SAT problems}
We test Algorithm~\ref{alg:Fast_Lower_Bounds} on weighted MAX-3SAT benchmark problems in 2009 MAX-SAT competition. These 3-CNF formulae are in $70$ variables with $400$ clauses. We remark that neither TSSOS nor CS-TSSOS is able to deal with these problems since their characteristic functions are of degree $3$, forcing the relaxation order to be at least $2$. This again causes a memory leak.

We apply Algorithm~\ref{alg:Fast_Lower_Bounds} to characteristic functions of these MAX-3SAT problems with the following two sets of parameters:
\begin{align*}
d&=1, \quad  k=\left|\operatorname{supp}(f)\right|, \quad &&l=0,  \quad m = \text{sum of weights}. \\
d&=2, \quad k=1.5 \left|\operatorname{supp}(f)\right|,\quad &&l=0, \quad  m = \text{sum of weights}.
\end{align*}

The goal of this experiment is to exhibit the flexibility of Algorithm~\ref{alg:Fast_Lower_Bounds}. Namely, the trade-off between the quality of the lower bound and the time cost can be controlled freely by parameters $d$ and $k$. As a comparison, such a trade-off in other SOS-based algorithms such as TSSOS and CS-TSSOS is controlled by the order of relaxations, which may easily result in a memory leak (cf.~\ref{subsubsec:MAX-2SAT}). Experimental results can be found in Table \ref{Tab:Max-3SAT}, where the meaning of labels are the same as those in Table~\ref{Tab:Max-2SAT}.
\begin{table}[htbp]
\begin{tabular}{|c|c|c|c|c|c|c|}
  \hline
 \multirow{2}{*}{No}&  \multirow{2}{*}{min}& \multicolumn{2}{|c|}{$d=1$, $k=\left|\operatorname{supp}(f)\right|$} & \multicolumn{2}{|c|}{$d=2$, $k=3\left|\operatorname{supp}(f)\right|/2$} \\ \cline{3-6}
   & &bound & time &bound & time \\ \hline
1&12&6.2853&615&8.8433&2265\\ \hline
2&19&10.7006&582&13.6655&2126\\ \hline
3&5&4.2047&601&4.8462&2176\\ \hline
4&20&12.1123&584&15.1769&2098\\ \hline
5&14&7.4614&605&9.7210&2183\\ \hline
6&12&8.0066&595&10.0030&2131\\ \hline
7&17&10.1713&605&12.7487&2157\\ \hline
8&7&5.4025&635&6.4714&2325\\ \hline
9&17&10.6177&597&13.6209&2121\\ \hline
10&12&8.2999&573&10.6198&2108\\ \hline
\end{tabular}
\caption{Weighted MAX-3SAT problems} \label{Tab:Max-3SAT}
\end{table}
\subsection{rounding for MAX-2SAT problems}\label{subset:second experiment}
 Let $S_k$ be the support selected by  Algorithm \ref{alg:selection} with input $l=0$, $d=2$, $m$ being the {number of clauses}, and   $k$ being selected   such that the cardinality of $S=S_k \cup M_1$ is  $|M_p|$, where $M_1$  is the set of monomials with degree at most $1$, and  $M_p$ is the monomial basis containing $M_1$ and monomials $z_i z_j$ whenever logic variables $x_i$ and $x_j$ appear in the same clause, see \cite[Definition 1]{VVH2008}.
We compare  the rounding techniques presented in Section~\ref{sec:rounding} with the rounding techniques with   scaling factors $\rho_i^N$ and $2^{-(i-1)}$ in \cite{VVH2008} on randomly generated  MAX-2SAT problems. For the rounding method based on the Gram matrix,  the maximum number of iterations of SDPNAL+ is set to be $|S|$. For the rounding method based on the moment matrix, the maximum number of iterations of SDPNAL+ is set to be $\max(300,|S|)$.
\subsubsection{experiment on random problems}
 In this experiment, we conduct the experiment in  \cite[Table~7]{VVH2008}. For $n=25,30,35,40$ and $m=3n, 5 n, 7 n$, we randomly generate  $100$ MAX-2SAT instances with $n$ variables and $m$ clauses, compare our  rounding techniques with those given in \cite{VVH2008}, and record the frequencies of finding the optimum by each method. Results are presented in Table~\ref{tab:MAX-2SAT_rounding}, in which "Vars" denotes the number of variables, "clause" denotes the number of clauses, "Gram" and "Moment" are  the the  frequencies of finding the optimum by methods based on the Gram matrix and  the moment matrix repectively, $\rho_i^N$ and $2^{-(i-1)}$ are the  frequencies by recommended methods presented in \cite{VVH2008}. Table~\ref{tab:MAX-2SAT_rounding} shows that the rounding method based on lower rank moment approximations  has the highest frequency to find the optimum.

%
%
\begin{table}[htbp]
\begin{center}
\begin{tabular}{|c|c|c|c|c|c|}
\hline
Vars&Clauses&Gram&Moment&$\rho_i^N$&$2^{-(i-1)}$ \\ \hline
25&75&63&\textbf{96}&70&76\\ \hline
25&125&75&\textbf{93}&67&81\\ \hline
25&175&80&\textbf{91}&64&87\\ \hline
30&90&71&\textbf{95}&67&68\\ \hline
30&150&70&\textbf{91}&58&70\\ \hline
30&210&68&\textbf{87}&63&73\\ \hline
35&105&74&\textbf{93}&56&65\\ \hline
35&175&77&\textbf{91}&55&78\\ \hline
35&245&68&\textbf{87}&52&72\\ \hline
40&120&79&\textbf{92}&51&56\\ \hline
40&200&72&\textbf{87}&44&59\\ \hline
40&280&67&\textbf{83}&54&76\\ \hline
\end{tabular}
\end{center}
\caption{rounding for random MAX-2SAT problems}
\label{tab:MAX-2SAT_rounding}
\end{table}
\subsubsection{experiment on benchmark problems}
We test the rounding techniques respectively presented in Subsection~\ref{sec:rounding} and \cite{VVH2008} on randomly generated unweighted MAX-2SAT problems in 2016
MAX-SAT competition. Table~\ref{tab:MAX-2SAT_rounding2} indicates that   both of our rounding  methods outperform the method presented in  \cite{VVH2008}.

Furthermore, we remark that since the interior point method is used in \cite{VVH2008} to solve SDP problems, their rounding techniques  usually take  at least   $6000$ seconds. In comparison, our methods take less than $1000$ seconds, as our methods allow one to use SDPNAL+ to solve SDP problems.
\begin{table}[htbp]
\begin{center}
\begin{tabular}{|c|c|c|c|c|c|c|}
\hline
No&clause&min&Gram&Moment& $\rho_i^N$&$2^{-(i-1)}$  \\ \hline
1&1200&161&\textbf{162}&\textbf{162}&225&227\\ \hline
2&1200&159&\textbf{159}&164&215&194\\ \hline
3&1200&160&\textbf{160}&\textbf{160}&162&\textbf{160}\\ \hline
4&1300&180&\textbf{180}&185&226&243\\ \hline
5&1300&172&\textbf{173}&178&225&230\\ \hline
6&1300&173&\textbf{173}&174&245&253\\ \hline
7&1400&197&\textbf{198}&202&234&270\\ \hline
8&1400&191&\textbf{192}&199&255&246\\ \hline
9&1400&189&\textbf{189}&\textbf{189}&227&231\\ \hline
\end{tabular}
\end{center}
\caption{rounding on MAX-2SAT benchmarks}
\label{tab:MAX-2SAT_rounding2}
\end{table}

\bibliographystyle{ACM-Reference-Format}
\bibliography{max-sat-arxiv}

\end{document}